\documentclass[12pt,reqno]{amsart}
\usepackage{geometry}                
\geometry{letterpaper}                   
\usepackage{amsmath,amssymb}
\usepackage{graphicx}
\usepackage{amssymb,latexsym, amsmath}
\usepackage{amsfonts,amsthm}
\usepackage{amscd}
\usepackage{mathrsfs}
\usepackage{hyperref}
\usepackage{eucal}
\usepackage{epstopdf}
\usepackage{amsgen}
\usepackage{xspace}
\usepackage{verbatim}
\usepackage{stmaryrd}
\DeclareGraphicsRule{.tif}{png}{.png}{`convert #1 `dirname #1`/`basename #1 .tif`.png}
\usepackage{enumitem}
\newlist{steps}{enumerate}{1}
\setlist[steps, 1]{label = Step \arabic*:}

\newcommand{\gd}{\Delta}

\newcommand{\inpt}[1]{\langle #1 \rangle}

\newcommand{\gw}{\Omega}

\newcommand{\gs}{\sigma}

\newcommand{\om}{\omega}

\newcommand{\nb}{\nabla}
\newcommand{\vp}{\varphi}
\newcommand{\ve}{\varepsilon}
\newcommand{\pdr}{\partial}

\newcommand{\tup}{\textup}

\newcommand{\beq}{\begin{equation}}
\newcommand{\eeq}{\end{equation}}
\newcommand{\bea}{\begin{align}}
\newcommand{\eea}{\end{align}}
\newcommand{\bthm}{\begin{theorem}}
\newcommand{\ethm}{\end{theorem}}
\newcommand{\bpr}{\begin{proof}}
\newcommand{\epr}{\end{proof}}
\newcommand{\bcl}{\begin{corollary}}
\newcommand{\ecl}{\end{corollary}}
\newcommand{\bpn}{\begin{proposition}}
\newcommand{\epn}{\end{proposition}}
\newcommand{\bre}{\begin{remark}}
\newcommand{\ere}{\end{remark}}
\newcommand{\bdf}{\begin{definition}}
\newcommand{\edf}{\end{definition}}
\newcommand{\bss}{\begin{align*}}
\newcommand{\ess}{\end{align*}}

\newcommand{\bl}{\label}

\newcommand{\zsw}{\theta_s \omega}
\newcommand{\ctw}{\theta_{-t} \omega}

\newtheorem{theorem}{Theorem}[section]
\newtheorem{corollary}[theorem]{Corollary}

\newtheorem{lemma}[theorem]{Lemma}
\newtheorem{proposition}[theorem]{Proposition}

\theoremstyle{definition}
\newtheorem{definition}[theorem]{Definition}
\theoremstyle{remark}
\newtheorem{remark}{Remark}

\numberwithin{equation}{section}


\begin{document}

\title[Stochastic Hindmarsh-Rose Equations]{Random Attractor for Stochastic Hindmarsh-Rose Equations with Multiplicative Noise}

\author[C. Phan]{Chi Phan}
\address{Department of Mathematics and Statistics, University of South Florida, Tampa, FL 33620, USA}
\email{chi@mail.usf.edu}
\thanks{}


\subjclass[2000]{Primary: 35K55, 35Q80, 37L30, 37L55, 37N25; Secondary: 35B40, 60H15, 92B20.}

\date{August 2, 2019}


\keywords{Stochastic Hindmarsh-Rose equations, random dynamical system, random attractor, pullback absorbing set, pullback asymptotic compactness}

\begin{abstract}
The longtime and global pullback dynamics of stochastic Hindmarsh-Rose equations  with multiplicative noise on a three-dimensional bounded domain in neurodynamics is investigated in this work. The existence of a random attractor for this random dynamical system is proved through the exponential transformation and uniform estimates showing the pullback absorbing property and the pullback asymptotically compactness of this cocycle in the $L^2$ Hilbert space.
\end{abstract}

\maketitle

\section{\textbf{Introduction}}. 

The Hindmarsh-Rose equations for neuronal spiking-bursting observed in experiments was initially proposed in \cite{HR1, HR2}. This mathematical model originally consists of three coupled nonlinear ordinary differential equations and has been studied through numerical simulations and mathematical analysis in recent years, cf. \cite{HR1, HR2, IG, MFL, SPH, Su} and the references therein. It exhibits rich bursting patterns, especially chaotic bursting and dynamics, as well as complex bifurcations. 

Very recently in \cite{PYS}, it is shown that there exists a global attractor for the diffusive and partly diffusive Hindmarsh-Rose equations in the deterministic environment.

In this work, we shall study the longtime random dynamics in terms of the existence of a random attractor for the stochastic diffusive Hindmarsh-Rose equations driven by a multiplicative white noise:
\begin{align}
    \frac{\pdr u}{\pdr t} & = d_1 \gd u +  \vp (u) + v - z + J + \ve u \circ \frac{dW}{dt}, \bl{suq} \\
    \frac{\pdr v}{\pdr t} & = d_2 \gd v + \psi (u) - v +  \ve v \circ \frac{dW}{dt}, \bl{svq} \\
    \frac{\pdr z}{\pdr t} & = d_3 \gd z + q (u - c) - rz +  \ve z \circ \frac{dW}{dt}, \bl{szq}
\end{align}
for $t > 0,\; x \in \gw \subset \mathbb{R}^{n}$ ($n \leq 3$), where $\gw$ is a bounded domain with locally Lipschitz continuous boundary, and the nonlinear terms 
\beq \bl{pp}
\vp (u) = au^2 - bu^3, \quad \text{and} \quad \psi (u) = \alpha - \beta u^2.
\eeq 
with the Neumann boundary condition
\begin{equation} \label{nbc1}
    \frac{\pdr u}{\pdr \nu} (t, x) = 0, \; \; \frac{\pdr v}{\pdr \nu} (t, x)= 0, \; \; \frac{\pdr z}{\pdr \nu} (t, x)= 0,\quad  t > 0,  \; x \in \partial \gw,
\end{equation}
and an initial condition
\begin{equation} \bl{inc1}
    u(0, x) = u_0 (x), \; v(0, x) = v_0 (x), \; z(0, x) = z_0 (x), \quad x \in \gw.
\end{equation}
Here $W(t), t \in \mathbb{R}$, is a one-dimensional standard Wiener process or called Brownian motion on the underlying probability space to be specified. The stochastic driving terms with the multiplicative noise indicate that the stochastic PDEs \eqref{suq}-\eqref{szq} are in the Stratonovich sense interpreted by the Stratonovich stochastic integrals and the corresponding differential calculus.

In this system \eqref{suq}-\eqref{szq}, the variable $u(t,x)$ refers to the membrane electric potential of a neuronal cell, the variable $v(t, x)$ represents the transport rate of the ions of sodium and potassium through the fast ion channels and is called the spiking variable, while the variables $z(t, x)$ represents the transport rate across the neuronal cell membrane through slow channels of calcium and other ions correlated to the bursting phenomenon and is called the bursting variable. 

Assume that all the parameters $a, b, \alpha, \beta, q, r, J$ and $\ve$ in the above equations are positive constants except $c \,(= u_R) \in \mathbb{R}$, which is a reference value of the membrane potential of a neuron cell. In the original model of ODE \cite{Su}, a set of the typical parameters are
\begin{gather*}
	J = 3.281, \;\; r = 0.0021, \;\; S = 4.0, \; \; q = rS,  \;\; c = -1.6,  \\[3pt]
	 \vp (s) = 3.0 s^2 - s^3, \;\; \psi (s) = 1.0 - 5.0 s^2.
\end{gather*}

\subsection{\textbf{The Hindmarsh-Rose Model in ODE}}

In 1982-1984, J.L. Hindmarsh and R.M. Rose developed the mathematical model to describe neuronal dynamics:
\begin{equation} \label{HR}
	\begin{split}
    \frac{du}{dt} & = au^2 - bu^3 + v - z + J,  \\
    \frac{dv}{dt} & = \alpha - \beta u^2  - v,  \\
    \frac{dz}{dt} & =  q (u - u_R) - rz.
    \end{split}
\end{equation}
This neuron model was motivated by the discovery of neuronal cells in the pond snail \emph{Lymnaea} which generated a burst after being depolarized by a short current pulse. This model characterizes the phenomena of synaptic bursting and especially chaotic bursting in a three-dimensional $(u, v, z)$ space. 

The chaotic dynamics is mainly reflected by the sensitive dependence of the longtime behavior of solutions on the initial conditions. The presence of the multiplicative noise as well as the diffusion of ions and membrane potential in the neuron model is expected to have large effect on the long-term behavior of the dynamical system in a random environment. 

The figure below is an illustration of the chaotic trajectories of the deterministic Hindmarsh-Rose model when the key parameter $J$ of the injected stimulation to the membrane potential varies.  
\begin{figure}[h]
		\includegraphics[width=10cm]{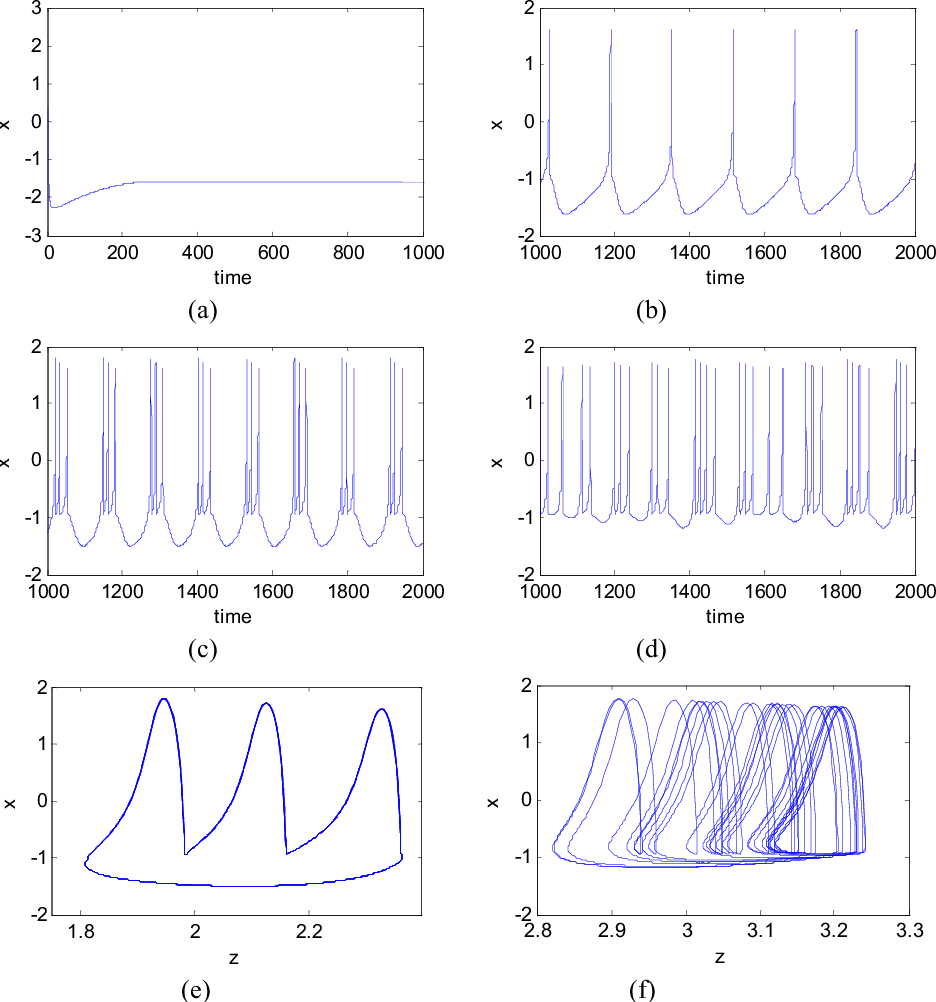}
		\caption{Time responses of the membrane potential for various value of the stimulated current: (a) resting state when J = 0, (b) tonic spiking when J = 1.2, (c) regular bursting when J = 2.2, (d) chaotic bursting when J = 3.1, (e) the x-z phase portrait when J = 2.2, (f) the x-z phase portrait when J = 3.1. Source: \cite{Ng}}
\end{figure}

Neuronal signals are short electrical pulses called spike or action potential. Neurons often exhibit bursts of alternating phases of rapid firing spikes and then quiescence. Bursting constitutes a mechanism to modulate and set the pace for brain functionalities and to communicate signals with the neighbor neurons. Bursting patterns occur in a variety of bio-systems such as pituitary melanotropic gland, thalamic neurons, respiratory pacemaker neurons, and insulin-secreting pancreatic $\beta$-cells, cf. \cite{BRS, CK,CS, HR2}. The current mathematical analysis of this neuron model mainly uses bifurcation theory together with numerical simulations, cf. \cite{BB, ET, EI, MFL, Ri, SPH, Tr, WS, Su}. 

Neurons communicate and coordinate actions through synaptic coupling or diffusive coupling (called gap junction) in neuroscience. Synaptic coupling of neurons has to reach certain threshold for release of quantal vesicles and form a synchronization \cite{DJ, Ru, SC}. 

The chaotic coupling exhibited in the simulations and analysis of this Hindmarsh-Rose model of ODE shows more rapid synchronization and more effective regularization of neurons due to \emph{lower threshold} than the synaptic coupling \cite{Tr, Su}. But the dynamics of chaotic bursting is highly complicated. 

It is known that Hodgkin-Huxley equations \cite{HH} (1952) provided a four-dimensional model for the dynamics of membrane potential taking into account of the sodium, potassium as well as leak ions current. The FitzHugh-Nagumo equations \cite{FH} (1961-1962) derived a two-dimensional model for an excitable neuron with the membrane potential and the current variable. This two-dimensional ODE model admits an exquisite phase plane analysis showing spikes excited by supra-threshold input pulses and sustained periodic spiking with refractory period, but due to the 2D nature FitzHugh-Nagumo equations exclude any chaotic solutions and chaotic dynamics so that no chaotic bursting can be generated.

The research on this model \eqref{HR} indicated the possibility to lower down the neuron firing threshold. More observations also indicate that the Hindmarsh-Rose model allows varying interspike-interval when the parameters vary. Therefore, the 3D model  \eqref{HR} is a suitable choice for the investigation of both the regular bursting and the chaotic bursting. It is expected that the augmented neuron model  of the stochastic Hindmarsh-Rose equations \eqref{suq}-\eqref{szq} studied in this paper will be exposed to a wide range of applications in neuroscience.

The rest of Section 1 is the formulation of the stochastic system \eqref{suq}-\eqref{szq} and provides basic concepts and results in the theory of random dynamics. In Section 2, we convert the stochastic PDEs to a system of random PDEs by the transformation of exponential multiplication. Then the global existence of pullback weak solutions is established. The uniform estimates will show the pullback absorbing property of the Hindmarsh-Rose semiflow in the $L^{2}$ space. In Section 3, we shall prove the main result on the existence of a random attractor for the diffusive Hindmarsh-Rose random dynamical system.

\subsection{\textbf{Preliminaries and Formulation}}

To study the stochastic dynamics in the asymptotically long run, we first recall the preliminary concepts for random dynamical systems, or called cocycles, cf. \cite{Ar, BLW, Ch, CDF, CF, EZ, FS, Ok, SH}. Let $(\mathfrak{Q}, \mathcal{F}, P)$ be a probability space and let $X$ be a real Banach space. 

\begin{definition} \label{sc1}
	$(\mathfrak{Q}, \mathcal{F}, P, \{{\theta_t}\}_{t \in \mathbb{R}})$ is called a $\textit{metric dynamical system}$ (MDS), if $(\mathfrak{Q}, \mathcal{F}, P)$ is a probability space and $\theta_t$ is a time-shifting mapping with the following conditions satisfied:
	
	(i) the mapping $\theta : \mathbb{R} \times \mathfrak{Q} \to \mathfrak{Q}$ is $(\mathscr{B} (\mathbb{R}) \otimes \mathcal{F}, \mathcal{F})$ - measurable,
	
	(ii) $\theta_0$ is the identity on $\mathfrak{Q}$,
	
	(iii) $\theta_{t + s} = \theta_t \circ \theta_s$ for all $t, s \in \mathbb{R}$, and
	
	(iv) $\theta_t$ is probability invariant, meaning $\theta_t P = P$ for all $t \in \mathbb{R}$.
	
\noindent Here $\mathscr{B} (X)$ stands for the $\sigma$-algebra of Borel sets in a Banach space $X$ and $(\theta_t P)(S) = P(\theta_t S)$ for any $S \in \mathcal{F}$.
\end{definition}

\begin{definition} \label{sc2}
	A continuous $\textit{random dynamical system}$ (RDS) briefly called a cocycle on $X$ over an MDS $(\mathfrak{Q}, \mathcal{F}, P, \{{\theta_t}\}_{t \in \mathbb{R}})$ is a mapping
	$$
	\varphi (t, \omega, x) : [0, \infty) \times  \mathfrak{Q} \times X \to X,
	$$
	which is $(\mathscr{B} (\mathbb{R^+}) \otimes \mathcal{F} \otimes \mathscr{B} (X), \mathscr{B} (X))$- measurable and satisfies the following conditions for every $\omega$ in $\mathfrak{Q}$:
	
	(i) $\varphi (0, \omega, \cdot)$ is the identity operator on $X$.
	
	(ii) The cocycle property holds:
	$$
	\varphi (t + s, \omega, \cdot) = \varphi (t, \theta_s \omega, \varphi (s, \omega, \cdot)), \quad \text{for all} \; t, s \geq 0.
	$$
	
	(iii) The mapping $\varphi (\cdot, \omega, \cdot) : [0, \infty) \times X \to X$ is strongly continuous.
\end{definition}

\begin{definition} \label{sc3}
	A set-valued function $B : \mathfrak{Q} \to 2^X$ is a random set in $X$ if its graph $\{(\omega, x) : x \in B(\omega)\} \subset \mathfrak{Q} \times X$ is an element of the product $\sigma$-algebra $\mathcal{F} \otimes \mathscr{B} (X)$. A $\textit{bounded}$ random set $B(\omega) \subset X$ means that there is a random variable $r(\omega) \in [0, \infty), \omega \in \mathfrak{Q}$, such that $\interleave {B(\omega)} \interleave := \sup_{x \in B(\omega)} \|x\| \leq r(\omega)$ for all $\omega \in \mathfrak{Q}$. A bounded random set $B(\om)$ is called $\textit{tempered}$ with respect to $\{\theta_t\}_{t \in \mathbb{R}}$ on $(\mathfrak{Q}, \mathcal{F}, P)$, if for any $\omega \in \mathfrak{Q}$ and for any constant $\beta > 0$,
		$$
		\lim_{t \to \infty} e^{-\beta t} \interleave B(\theta_{-t} \omega)\interleave = 0.
		$$
	A random set $S(\omega) \subset X$ is called $\textit{compact}$ (reps. $\textit{precompact}$) if for every $\omega \in \mathfrak{Q}$ the set $S(\omega)$ is a compact (reps. precompact) set in $X$.
\end{definition}

\begin{definition} \label{sc4}
	A random variable $R : (\mathfrak{Q}, \mathcal{F}, P) \to (0, \infty)$ is called $ \textit{tempered with}$ $\textit{respect to a metric dynamical system}$ $\{\theta_t\}_{t \in \mathbb{R}}$ on $(\mathfrak{Q}, \mathcal{F}, P)$, if for any $\om \in \mathfrak{Q}$,
	$$
	\lim_{t \to -\infty} \frac{1}{t} \; \text{log} \; R(\theta_t \omega) = 0.
	$$
\end{definition}

\begin{remark}
	If $\{B(\omega)\}_{\omega \in \mathfrak{Q}}$ is a closed random set of $X$ such that for any fixed $x \in X$ the mapping $\omega \mapsto d(x, B(\omega)) = \inf \{ \|x - y\| : y \in B(\omega)\}$ is $(\mathcal{F}, \mathscr{B} (\mathbb{R^+})$-measurable, then $B(\omega)$ is a random set in the sense of Definition \ref{sc3}, cf. \cite{Castaing}. Considering that a random set may be neither closed or open, Definition \ref{sc3} is more general.
\end{remark}

We shall let $\mathscr{D}_X$ denote an inclusion-closed family of random sets in $X$, meaning that if $D = \{D(\omega)\}_{\omega \in \mathfrak{Q}} \in \mathscr{D}_X$ and $\hat{D} = \{\hat{D}(\omega)\}_{\omega \in \mathfrak{Q}}$ with $\hat{D}(\omega) \subset D(\omega)$ for all $\omega \in \mathfrak{Q}$, then $\hat{D} \in \mathscr{D}_X$. Such a family of random sets in $X$ is called a $\textit{universe}$. In this work, we define $\mathscr{D}_H$ to be the universe of all the tempered random sets in the Hilbert space $H = L^2 (\gw, \mathbb{R}^3)$.

\begin{definition} \label{sc5}
	For a given universe $\mathscr{D}_X$ of random sets in a Banach space $X$, a random set $K \in \mathscr{D}_X$ is called a $\textit{pullback absorbing set}$ with respect to an RDS (cocycle) $\varphi$ over the MDS $(\mathfrak{Q}, \mathcal{F}, P, \{{\theta_t}\}_{t \in \mathbb{R}})$, if for any bounded random set $B \in \mathscr{D}_X$ and any $\omega \in \mathfrak{Q}$ there exists a finite time $T_B (\omega) > 0$ such that
	$$
	\varphi (t, \theta_{-t} \omega, B (\theta_{-t} \omega)) \subset K (\omega), \quad \text{for all} \;\;  t \geq T_B (\omega).
	$$
\end{definition}

\begin{definition} \label{sc6}
	Let a universe $\mathscr{D}_X$ of random sets in a Banach space $X$ be given, A random dynamical system (cocycle) $\varphi$ is $\textit{pullback asymptotically compact}$ with respect to $\mathscr{D}_X$ , if for any $\omega \in \mathfrak{Q}$, the sequence
	$$
	\{\varphi (t_m, \theta_{- t_m} \omega, x_m)\}^\infty_{m = 1} \; \text{has a convergent subsequence in} \; X,
	$$
	whenever $t_m \to \infty$ and $x_m \in B (\theta_{-t} \omega)$ for any given $B \in \mathscr{D}_X$.
\end{definition}

\begin{definition} \label{sc7}
	Let a universe $\mathscr{D}_X$ of tempered random sets in a Banach space $X$ be given. A random set $\mathcal{A} \in \mathscr{D}_X$ is called a $\textit{random attractor}$ for a given random dynamical system (cocycle) $\varphi$ over the metric dynamical system $(\mathfrak{Q}, \mathcal{F}, P, \{{\theta_t}\}_{t \in \mathbb{R}})$, if the following conditions are satisfied:
	
	(i) $\mathcal{A}$ is a compact random set in the space $X$.
	
	(ii) $\mathcal{A}$ is invariant in the sense that 
	$$
	\varphi (t, \omega, \mathcal{A}(\omega)) = \mathcal{A}(\theta_t \omega), \quad \text{for all} \;\; t \geq 0, \; \om \in \mathfrak{Q}.
	$$
	
	(iii) $\mathcal{A}$  attracts every $B \in \mathscr{D}_X$ in the pullback sense that 
	$$
	\lim_{t \to \infty} dist_X (\varphi (t, \theta_{-t} \omega, B (\theta_{-t} \omega)), \mathcal{A} (\omega)) = 0, \quad \om \in \mathfrak{Q},
	$$
	where $dist_X (\cdot, \cdot)$ is the Hausdorff semi-distance with respect to the $X$-norm. Then $\mathscr{D}_X$ is called the $\textit{basin}$ of attraction for $\mathcal{A}$.
\end{definition}	

The existence of random attractors for continuous and discrete random dynamical systems has been studied in the recent three decades by many authors, cf. \cite{Ar, BLW, CGA, Ch, CDF, CF, HSZ, SH, Sm, SWW, W12, W14, W19, Y14, YY14, Y17, Zs}.  The following theorem is shown in \cite{CF, SH}.

\begin{theorem} \bl{sc8}
	Given a Banach space $X$ and a universe $\mathscr{D}_X$ of random sets in $X$, let $\varphi$ be a continuous random dynamical system on $X$ over the metric dynamical system $(\mathfrak{Q}, \mathcal{F}, P, \{{\theta_t}\}_{t \in \mathbb{R}})$. If the following two conditions are satisfied:
	
	\textup{(i)} there exists a closed pullback absorbing set $K = \{K (\omega)\}_{\omega \in \mathfrak{Q}} \in \mathscr{D}_X$ for $\varphi$,
	
	\textup{(ii)} the cocycle $\varphi$ is pullback asymptotically compact with respect to $\mathscr{D}_X$, 
	
\noindent then there exists a unique random attractor $\mathcal{A} = \{\mathcal{A} (\omega)\}_{ \omega \in \mathfrak{Q}} \in \mathscr{D}_X$ for the cocycle $\varphi$ and the random attractor is given by
	$$
	\mathcal{A} (\omega) = \bigcap_{\tau \geq 0} \; {\overline{\bigcup_{ t \geq \tau} \varphi (t, \theta_{-t} \omega, K (\theta_{-t} \omega))}}, \quad  \om \in \mathfrak{Q}.
	$$
\end{theorem}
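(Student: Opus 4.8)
The plan is to construct the attractor as the pullback $\omega$-limit set of the absorbing set $K$ and then verify, in turn, compactness, attraction, invariance, measurability, and uniqueness. Concretely, for each $\omega \in \mathfrak{Q}$ I set
\[
\mathcal{A}(\omega) = \bigcap_{\tau \geq 0}\overline{\bigcup_{t \geq \tau}\varphi(t, \theta_{-t}\omega, K(\theta_{-t}\omega))},
\]
which is the formula claimed in the theorem. The first basic fact I would record is the sequential characterization: $y \in \mathcal{A}(\omega)$ if and only if there exist $t_m \to \infty$ and $x_m \in K(\theta_{-t_m}\omega)$ with $\varphi(t_m, \theta_{-t_m}\omega, x_m) \to y$. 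Since $K$ is closed and pullback absorbing, there is $\tau$ with $\varphi(t, \theta_{-t}\omega, K(\theta_{-t}\omega)) \subset K(\omega)$ for all $t \geq \tau$, so $\mathcal{A}(\omega) \subset K(\omega)$; as $K \in \mathscr{D}_X$ and $\mathscr{D}_X$ is inclusion-closed, this already places $\mathcal{A}$ in $\mathscr{D}_X$. Nonemptiness and compactness of each fiber then come from condition (ii): any sequence $\varphi(t_m, \theta_{-t_m}\omega, x_m)$ with $t_m \to \infty$, $x_m \in K(\theta_{-t_m}\omega)$ has a convergent subsequence whose limit lies in the closed set $\mathcal{A}(\omega)$, and a diagonal extraction upgrades this to sequential compactness of $\mathcal{A}(\omega)$ itself, giving property (i) of Definition \ref{sc7}.

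For the attraction property (iii) I would proceed in two stages. First I show $\mathcal{A}$ attracts $K$ itself by contradiction: were it to fail, there would be $\delta > 0$, $t_m \to \infty$, and $x_m \in K(\theta_{-t_m}\omega)$ with $\dist_X(\varphi(t_m, \theta_{-t_m}\omega, x_m), \mathcal{A}(\omega)) \geq \delta$, yet condition (ii) produces a subsequence converging to some $y$, which belongs to $\mathcal{A}(\omega)$ directly by the characterization --- a contradiction. To pass to an arbitrary $B \in \mathscr{D}_X$, I would use the cocycle property and the absorbing property at shifted base points: for each fixed $s \geq 0$ and all $t$ large enough that $t - s$ exceeds the absorbing time $T_B(\theta_{-s}\omega)$, one has $\varphi(t, \theta_{-t}\omega, B(\theta_{-t}\omega)) \subset \varphi(s, \theta_{-s}\omega, K(\theta_{-s}\omega))$. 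Taking $\limsup_{t \to \infty}$ and then $s \to \infty$, the attraction of $K$ just established forces $\dist_X(\varphi(t, \theta_{-t}\omega, B(\theta_{-t}\omega)), \mathcal{A}(\omega)) \to 0$.

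The invariance property (ii), $\varphi(t, \omega, \mathcal{A}(\omega)) = \mathcal{A}(\theta_t\omega)$, is the step I expect to be the main obstacle, and I would prove it by a double inclusion combining continuity with condition (ii). For ``$\subseteq$'', if $\varphi(t_m, \theta_{-t_m}\omega, x_m) \to y \in \mathcal{A}(\omega)$, then strong continuity of $\varphi(t, \omega, \cdot)$ and the cocycle identity give $\varphi(t, \omega, \varphi(t_m, \theta_{-t_m}\omega, x_m)) = \varphi(t + t_m, \theta_{-(t+t_m)}\theta_t\omega, x_m) \to \varphi(t, \omega, y)$; since $t + t_m \to \infty$ and $x_m \in K(\theta_{-(t+t_m)}\theta_t\omega)$, the characterization at base $\theta_t\omega$ gives $\varphi(t, \omega, y) \in \mathcal{A}(\theta_t\omega)$. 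For ``$\supseteq$'', given $z \in \mathcal{A}(\theta_t\omega)$ realized by $\varphi(s_m, \theta_{-s_m}\theta_t\omega, \eta_m) \to z$, I split $s_m = t + (s_m - t)$ through the cocycle property to write this as $\varphi(t, \omega, \zeta_m)$ with $\zeta_m = \varphi(s_m - t, \theta_{-(s_m - t)}\omega, \eta_m)$ and $\eta_m \in K(\theta_{-(s_m-t)}\omega)$; condition (ii) extracts $\zeta_{m_k} \to y \in \mathcal{A}(\omega)$, and continuity forces $\varphi(t, \omega, y) = z$, so $z \in \varphi(t, \omega, \mathcal{A}(\omega))$. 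The reason I flag this step is that the reverse inclusion genuinely needs the asymptotic compactness to manufacture the preimage $y$, whereas the forward inclusion uses only continuity.

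Finally I would settle measurability and uniqueness. Measurability of the random set $\mathcal{A}$ reduces, via the criterion in the Remark following Definition \ref{sc4}, to showing $\omega \mapsto \dist_X(x, \mathcal{A}(\omega))$ is measurable for each fixed $x$, which follows by writing $\mathcal{A}(\omega)$ through countable unions and intersections of images of the jointly measurable cocycle $\varphi$ and the measurable shift $\theta$, using separability of $X$. Uniqueness is then immediate: if $\mathcal{A}'$ is another attractor in $\mathscr{D}_X$, invariance together with compactness and the mutual pullback attraction of $\mathcal{A}$ and $\mathcal{A}'$ forces $\mathcal{A}'(\omega) \subset \mathcal{A}(\omega)$ and, symmetrically, the reverse inclusion, so $\mathcal{A}' = \mathcal{A}$.
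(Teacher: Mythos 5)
The paper does not prove this theorem at all --- it is quoted verbatim from the literature with the citation \cite{CF, SH} --- so there is nothing internal to compare against. Your argument is the standard pullback $\omega$-limit-set construction found in those references, and it is correct: the sequential characterization of $\mathcal{A}(\omega)$, the two-stage attraction argument (first $K$, then general $B \in \mathscr{D}_X$ via absorption at the shifted base point $\theta_{-s}\omega$), and the double-inclusion proof of invariance using asymptotic compactness to produce the preimage all check out. The only part that is merely sketched is the measurability of $\omega \mapsto \mathcal{A}(\omega)$, which is genuinely the most technical point of the theorem and is handled in \cite{CF} by exactly the kind of reduction to countable operations on the jointly measurable cocycle that you indicate; for a complete proof you would need to spell out how the uncountable union over $t \geq \tau$ is replaced by a countable one.
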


We now formulate the initial-boundary value problem \eqref{suq}--\eqref{inc1} of the stochastic Hindmarsh-Rose equations with the multiplicative white noise in the framework of the product Hilbert spaces
\beq \bl{srd1}
H = L^2 (\gw, \mathbb{R}^3) \quad \text{and} \quad E = H^1 (\gw, \mathbb{R}^3).
\eeq
The norm and inner-product of $H$ or $L^2 (\gw)$ will be denoted by $\| \cdot\|$ and $\inpt{\cdot, \cdot}$, respectively. The norm of space $E$ will be denoted by $\| \cdot\|_E$. The norm of $L^p (\gw) $ or $L^p (\gw, \mathbb{R}^3)$ will be denoted by $\| \cdot\|_{L^p}$ for $p \neq 2$. W use $| \cdot|$ to denote a vector norm in Euclidean spaces. 

The nonpositive self-adjoint linear differential operator
\begin{equation} \label{srd2}
A =
\begin{pmatrix}
d_1 \gd  & 0   & 0 \\[3pt]
0 & d_2 \gd  & 0 \\[3pt]
0 & 0 & d_3 \gd
\end{pmatrix}
: D(A) \rightarrow H,
\end{equation}
where 
\begin{equation*}
D(A) = \left\{(\varphi, \phi, \zeta) \in H^2 (\gw, \mathbb{R}^3) : \frac{\partial \vp}{\partial \nu} = \frac{\partial \phi}{\partial \nu} = \frac{\partial \zeta}{\partial \nu} = 0 \;\;\text{on} \;\; \partial \gw \right\}
\end{equation*}
is the generator of an analytic $C_0$-semigroup $\{e^{At}\}_{t \geq 0}$ of contraction on the Hilbert space $H$. By the Sobolev embedding $H^{1}(\gw) \hookrightarrow L^6(\gw)$ for space dimension $n \leq 3$, the nonlinear mapping 
\begin{equation} \label{srd3}
f(u,v, z) =
\begin{pmatrix}
\vp (u) + v - z + J \\[4pt]
\psi (u) - v,  \\[4pt]
q (u - c) - rz
\end{pmatrix}
: E \longrightarrow H
\end{equation}
is locally Lipschitz continuous. Thus the initial-boundary value problem \eqref{suq}--\eqref{inc1} is formulated into an initial value problem of the following stochastic Hindmarsh-Rose evolutionary equation driven by a multiplicative white noise,
\begin{equation} \label{srd4}
\begin{split}
\frac{dg}{dt} &= A g + f(g) + \ve g \circ \frac{dW}{dt}, \quad t > \tau \in \mathbb{R}, \; \om \in \mathfrak{Q}, \\
&g (\tau) = g_0 = (u_0, v_0, z_0) \in H.
\end{split}
\end{equation}
Here $g(t, \omega, g_0) = \text{col} \, (u(t, \cdot, \omega, g_0), v(t, \cdot, \omega, g_0), z(t, \cdot, \omega, g_0))$, where dot stands for the hidden spatial variable $x$.

Assume that $\{W(t)\}_{t \in \mathbb{R}}$ is a one-dimensional, two-sided standard Wiener process in the probability space $(\mathfrak{Q}, \mathcal{F}, P)$, where the sample space
\beq \bl{scrd5}
\mathfrak{Q} = \{\omega \in C(\mathbb{R}, \mathbb{R}) : \omega (0) = 0\}
\eeq
where $C(\mathbb{R}, \mathbb{R})$ stands for the metric space of continuous functions on the real line, the $\sigma$-algebra $\mathcal{F}$ is generated by the compact-open topology endowed in $\mathfrak{Q}$, and $P$ is the corresponding Wiener measure \cite{Ar, Ch, Ok} on $\mathcal{F}$. Define the $P$-preserving time-shift transformations $\{\theta_t\}_{t \in \mathbb{R}}$ by
\beq \bl{srd6}
     (\theta_t \omega) (\cdot) = \omega (\cdot + t) - \omega (t), \quad \text{for} \;\; t \in \mathbb{R},\; \omega \in \mathfrak{Q}.
\eeq
Then $(\mathfrak{Q}, \mathcal{F}, P, \{{\theta_t}\}_{t \in \mathbb{R}})$ is a metric dynamical system and the stochastic process $\{W (t, \omega) = \omega (t) : t \in \mathbb{R},\, \omega \in \mathfrak{Q}\}$ is the canonical Wiener process. Accordingly $dW/dt$ in \eqref{srd4} denotes the white noise. The results we shall prove in this paper can be extended to a vector white noise with three different but independent scalar noises in the three component equations.

In the recent paper \cite{PYS}, we have shown the existence of a global attractor for the diffusive deterministic Hindmarsh-Rose equations and for the partly diffusive Hindmarsh-Rose equations in the space $H$. In this paper, it will be shown that there exists a random attractor in the space $H$ for the random dynamical system generated by the global solutions of the stochastic evolutionary equation \eqref{srd4}. 

\section{\textbf{Random Hindmarsh-Rose Equations and Pullback Dissipativity}}

The mathematical treatment of the stochastic PDE such as in the form of \eqref{suq}-\eqref{szq} driven by the multiplicative noise will be facilitated by its conversion to a random PDE with coefficients and initial data being random variables instead. For this purpose, one can exploit the following properties of the Wiener process. 

\begin{proposition} \label{sc9}
	Let the MDS $(\mathfrak{Q}, \mathcal{F}, P, \{{\theta_t}\}_{t \in \mathbb{R}})$ and the Wiener process $W(t)$ be defined as above. Then the following statements hold.
	
	\textup{(1)} The Wiener process $W(t)$ has the asymptotically sublinear growth property,
	\beq \bl{srd7}
	\lim_{t \to \pm \infty} \frac{|W(t)|}{|t|} = 0, \quad \text{a.s.}
	\eeq
	
	\textup{(2)} For any given positive constant $\lambda$, the stochastic process $X(t) = e^{-\lambda W(t)}$ is a solution of the following stochastic differential equation in the Stratonovich sense, 
	\beq \bl{srd8}
	dX_t = - \lambda X_t \circ d W_t.
	\eeq
	
	\textup{(3)} $W(t)$ is locally H\"older continuous with exponents $\gamma \in \left(0, \frac{1}{2}\right)$. It means that for any integer n,
	\beq \bl{srd9}
	\sup_{n \leq s < t \leq n + 1} \frac{|W(t) - W(s)|}{|t -s|^\gamma} < \infty, \quad \text{a.s.}
	\eeq
\end{proposition}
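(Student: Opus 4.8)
The plan is to prove the three assertions separately, in each case reducing to a classical property of one-dimensional Brownian motion on the canonical Wiener space and taking care that the two-sided nature of $W$ makes the limits hold as $t \to \pm\infty$.

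For (1) I would obtain the sublinear growth \eqref{srd7} from the time-inversion invariance of Brownian motion. For almost every $\omega$ the process $B(s) = s\,W(1/s)$, $s > 0$, together with $B(0) = 0$, is again a standard Brownian motion, so $B$ is almost surely continuous at the origin. Since $W(t)/t = B(1/t)$, letting $t \to +\infty$ drives $1/t \to 0^+$ and continuity of $B$ gives $\lim_{t\to+\infty} W(t)/t = B(0) = 0$ almost surely. Applying the same reasoning to the reflected process $t \mapsto W(-t)$, which is also a two-sided standard Brownian motion, yields the limit as $t \to -\infty$. As an alternative one could simply cite the law of the iterated logarithm, which bounds $|W(t)|$ by a constant multiple of $\sqrt{2|t|\log\log|t|}$ for large $|t|$ and hence forces $|W(t)|/|t| \to 0$; I prefer the time-inversion route because it is elementary and avoids the finer LIL constants.

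For (2) the essential point is that the Stratonovich differential obeys the ordinary chain rule. Writing $\phi(w) = e^{-\lambda w}$ and $X(t) = \phi(W(t))$, the chain rule gives $dX_t = \phi'(W_t)\circ dW_t = -\lambda e^{-\lambda W_t}\circ dW_t = -\lambda X_t \circ dW_t$, which is \eqref{srd8}. To justify this rigorously I would pass through the It\^o--Stratonovich conversion: It\^o's formula yields $dX_t = -\lambda X_t\,dW_t + \tfrac{1}{2}\lambda^2 X_t\,dt$, while converting the Stratonovich expression in \eqref{srd8} to It\^o form adds the correction $\tfrac{1}{2}\,d\langle -\lambda X, W\rangle_t = \tfrac{1}{2}\lambda^2 X_t\,dt$, so that $-\lambda X_t \circ dW_t = -\lambda X_t\,dW_t + \tfrac{1}{2}\lambda^2 X_t\,dt$. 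The two It\^o expressions coincide, which proves the identity.

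For (3) I would invoke the Kolmogorov--Chentsov continuity theorem. Because $W(t) - W(s)$ is centered Gaussian with variance $|t-s|$, one has $\mathbb{E}\,|W(t) - W(s)|^p = C_p\,|t-s|^{p/2}$ for every $p \geq 2$, where $C_p$ is the $p$-th absolute moment of a standard normal variable. The Kolmogorov criterion applied with the pair $(p,\, p/2 - 1)$ produces a modification that is locally H\"older continuous with any exponent strictly below $\tfrac{1}{2} - \tfrac{1}{p}$; sending $p \to \infty$ exhausts all $\gamma \in (0, \tfrac{1}{2})$, and restricting to the interval $[n, n+1]$ delivers the finite supremum bound \eqref{srd9}. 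Since all three statements are textbook facts, I do not anticipate a genuine obstacle; the only places needing care are the handling of the two-sided process in (1) so that both one-sided limits follow, and the correct bookkeeping of the Stratonovich correction term in (2).
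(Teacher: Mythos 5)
Your proposal is correct. Parts (2) and (3) follow essentially the same route as the paper: It\^o's formula combined with the It\^o--Stratonovich conversion (your bookkeeping of the correction term $\tfrac{1}{2}\,d\langle -\lambda X, W\rangle_t = \tfrac{1}{2}\lambda^2 X_t\,dt$ agrees with the paper's substitution $h(w) = \lambda e^{-\lambda w}$ into the transformation formula), and the Kolmogorov moment criterion for local H\"older continuity, which you spell out in more detail than the paper does. The only genuine divergence is in part (1): the paper deduces \eqref{srd7} directly from the law of the iterated logarithm, whereas your primary argument is time inversion, using that $B(s) = sW(1/s)$ extends continuously to a Brownian motion at $s=0$. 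Both are valid; time inversion is more elementary in that it avoids the LIL entirely, while the LIL route is a one-line citation and also quantifies the rate $|W(t)| = O(\sqrt{2|t|\log\log|t|})$, which is stronger than what \eqref{srd7} records. Since you mention the LIL as an alternative anyway, the two proofs are fully compatible. One small point worth making explicit in part (3): the Kolmogorov--Chentsov theorem produces a H\"older continuous \emph{modification}, but on the canonical space \eqref{scrd5} the process $W(t,\om)=\om(t)$ is already continuous, so the modification is indistinguishable from $W$ and the bound \eqref{srd9} holds for $W$ itself.
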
	

\begin{proof}
	By the law of iterated logarithm \cite{Ok},
	$$
	\lim_{t \to \pm \infty} \sup \frac{|W(t)|}{\sqrt{2 |t|\; \text{log log}\;|t|}} =1, \quad \text{a.s.}
	$$
	Then \eqref{srd7} is valid. Next, from It\^o's formula \cite{Ok} we have
	$$
	dX_t = - \lambda e^{- \lambda W_t} dW_t + \frac{1}{2} \, \lambda^2 e^{- \lambda W_t} dt.
	$$
	On the other hand, the transformation formula \cite{Ok} of the stochastic It\^{o} integral and the Stratonovich integral  reads 
	$$
	h(W_t) \circ dW_t \;(\text{Stratonovich sense}) = h(W_t) dW_t \;(\text{It\^o sense}) + \frac{1}{2}\, h' (W_t) dt, 
	$$
	as long as $h(W_t)$ and $h'(W_t)$ are locally $L^2$-integrable. Set $h(\omega) = \lambda e^{- \lambda \omega}$ in the above equality. Then 
	$$
	- \lambda X_t \circ dW_t = - \lambda e^{- \lambda W_t} dW_t + \frac{1}{2}\, \lambda^2 e^{- \lambda W_t} dt.
	$$
	Hence \eqref{srd8} holds. Finally, \eqref{srd9} follows from the Kolmogorov Moment Criterion.
\end{proof}

       We now convert the stochastic PDE \eqref{suq} - \eqref{szq} to a system of random PDE by the exponential multiplication of $Q(t, \om) = e^{- \ve \om (t)}$:
\beq \bl{etrans}
	U(t) = Q(t, \om) u(t), \quad V(t) = Q(t, \om) v(t), \quad Z(t) = Q(t, \om) z(t).
\eeq
According to the second statement in Proposition \ref{sc9}, the initial-boundary value problem \eqref{suq}--\eqref{inc1} is equivalently converted to the following system of random PDEs:
\begin{align}
    \frac{\pdr U}{\pdr t} & = d_1 \gd U +  \frac{a}{Q(t, \om)} U^2 - \frac{b}{Q(t, \om)^2} U^3 + V - Z + J Q(t, \om), \bl{sUq} \\
    \frac{\pdr V}{\pdr t} & = d_2 \gd V + \alpha Q(t, \om) - \frac{\beta}{Q(t, \om)} U^2 - V, \bl{sVq} \\
    \frac{\pdr Z}{\pdr t} & = d_3 \gd Z + q (U - c \,Q(t, \om)) - r Z, \bl{sZq}
\end{align}
for $\om \in \mathfrak{Q}, \, t > 0,\, x \in \gw \subset \mathbb{R}^{n}$ ($n \leq 3$), with the boundary condition
\begin{equation} \label{nbc2}
    \frac{\pdr U}{\pdr \nu} (t, x, \om) = 0, \; \frac{\pdr V}{\pdr \nu} (t, x, \om)= 0, \; \frac{\pdr Z}{\pdr \nu} (t, x, \om)= 0, \quad t \geq \tau \in \mathbb{R}, \;  x \in \partial \gw,
\end{equation}
and an initial condition for $\om \in \mathfrak{Q}$,
\begin{equation} \bl{inc2}
    (U(\tau, x, \om), V(\tau, x, \om), Z(\tau, x, \om)) = Q(\tau, \om)(u_0 (x), v_0 (x), z_0 (x)), \; \; x \in \gw.
\end{equation}
The equations \eqref{sUq}-\eqref{sZq} are pathwise nonautonomous random PDEs and \eqref{sUq}-\eqref{inc2} can be written as the initial value problem of the random evolutionary equation:
\beq \bl{sGq}
	\begin{split}
	&\frac{\pdr G}{\pdr t}  =  AG  +  F(G, \theta_t \om), \quad t \geq \tau \in \mathbb{R}, \; \om \in \mathfrak{Q}, \\
	& G(\tau, \om) = G_\tau (\om) = Q(\tau, \om) (u_0, v_0, z_0), \; \om \in \mathfrak{Q},
	\end{split}
\eeq
for any $g_0 = (u_0, v_0, z_0) \in H$. Here we define the weak solution of the initial value problem \eqref{sGq} with the initial state $G_{\tau} = Q(\tau, \om)g_0$,
$$
	G(t, \om; \tau, G_\tau) = Q(t, \om) \begin{pmatrix}
	u \\
	v \\
	z
	\end{pmatrix} 
	(t, \cdot, \,\om; \, \tau, G_\tau) = \begin{pmatrix}
	U \\
	V \\
	Z
	\end{pmatrix} 
	(t, \cdot, \,\om; \, \tau, G_\tau),
$$
to be the pathwise weak solution \cite[page 283]{CV} of the nonautonomous initial-boundary problem \eqref{sUq}-\eqref{inc2}, specified as in \cite[Definition 2.1]{Y12}.

By conducting \emph{a priori} estimates on the Galerkin approximate solutions of the equations \eqref{sUq}-\eqref{sZq} and the compactness argument outlined in \cite[Chapter II and XV]{CV} with some adaptations, we can prove the local existence and uniqueness of the weal solution $G(t, \om)$ in the space $H$ on a local time interval $t \in [\tau, T(\om, G_\tau)]$, and the solution is continuously depending on the initial data. Further by the parabolic regularity \cite[Theorem 48.5]{SY}, every weak solution becomes a strong solution in the space $E$ when $t > \tau$ in the time interval of existence. Every weak solution $G(t, \om)$ of the problem \eqref{sGq} on the maximal existence interval has the property 
\beq 
	G \in C([\tau, T_{max}), H) \cap C^1 ((\tau, T_{max}), H) \cap L^2_{loc} ([\tau, T_{max}), E).
\eeq

\subsection{\textbf{Global Existence of Pullback Solutions}}

In this section, we first prove the global existence of all the pullback weak solutions of the problem \eqref{sUq}-\eqref{inc2} and to explore the dissipativity of the generated random dynamical system.

\begin{lemma} \label{theo1}
	There exists a random variable $r_0 (\om) > 0$ depending only on the parameters such that, for any given random variable $\rho(\om) > 0$, there is a time $-\infty < \tau(\rho,\om) \leq -1$ and the following statement holds. For any  $t_0 \leq \tau(\rho,\om)$ and for any initial data $g_0 = (u_0, v_0, z_0) \in H$ with $\|g_0\| \leq \rho(\om)$, the weak solution $G(t,\om)$ of the problem \eqref{sGq} with $G(t_0,\om) = Q(t_0,\om) g_0$ uniquely exists on $[t_0,-1]$ and satisfies 
	\beq \bl{srd10}
	\|G(-1,\,\om;\,t_0,\,Q(t_0,\om)g_0)\| \leq r_0(\om),\quad \;\; \om \in \mathfrak{Q}.
	\eeq
\end{lemma}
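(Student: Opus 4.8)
The plan is to reduce the vector system \eqref{sUq}--\eqref{sZq} to a single scalar energy inequality for a weighted sum of the squared $L^2$-norms of the three components, and then to exploit the sub-exponential growth of the Wiener path from Proposition \ref{sc9}(1) to pass to the pullback limit. First I would carry out the standard $L^2$ energy estimates on the Galerkin approximations (justified by the regularity $G \in C([\tau,T_{max}),H)\cap L^2_{loc}([\tau,T_{max}),E)$ recorded above): test \eqref{sUq} with $U$, \eqref{sVq} with $V$, and \eqref{sZq} with $Z$, so that by the Neumann condition \eqref{nbc2} each diffusion term contributes $-d_i\|\nabla\cdot\|^2 \le 0$. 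I would then form the weighted combination
\beq
\Phi(t) = \|U(t)\|^2 + C_V\|V(t)\|^2 + \tfrac{1}{q}\|Z(t)\|^2 ,
\eeq
where the weight $\tfrac1q$ on the $Z$-equation is chosen precisely so that its coupling term $\tfrac1q\cdot q\,\inpt{U,Z} = \inpt{U,Z}$ cancels the term $-\inpt{Z,U}$ produced by \eqref{sUq}, and $C_V>0$ is a small constant to be fixed next.

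The decisive dissipative mechanism is the quartic term $-\tfrac{b}{Q^2}\|U\|_{L^4}^4$ coming from the cubic nonlinearity. Using Young's inequality I would absorb into fractions of this quartic term: the cubic contribution $\tfrac{a}{Q}\int_{\gw} U^3$; the cross term $-C_V\tfrac{\beta}{Q}\int_{\gw} U^2 V$ (splitting $U^2|V| \le \eta\,U^4 + C_\eta V^2$ and then choosing $C_V$ small enough that the resulting $\|V\|^2$-coefficient stays below the damping $C_V\|V\|^2$ furnished by the $-v$ term); and the remaining quadratic and linear terms $\inpt{V,U}$, $JQ\int_{\gw} U$, $\alpha Q\int_{\gw} V$, $-cqQ\int_{\gw} Z$. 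Each absorption leaves a constant multiplied by $|\gw|$ and a power of $Q(t,\om) = e^{-\ve\om(t)}$, while the damping terms $-C_V\|V\|^2$ and $-\tfrac{r}{q}\|Z\|^2$ supply dissipation in the $V$ and $Z$ directions. The outcome is a differential inequality
\beq
\frac{d\Phi}{dt} \le -K\,\Phi + R(t,\om), \qquad t \in [t_0,-1],
\eeq
with a constant $K>0$ depending only on the parameters and a nonnegative forcing $R(t,\om)$ assembled from powers of $Q(t,\om)$; since $\om(t)/t \to 0$, $R$ grows only sub-exponentially in $|t|$.

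Finally I would apply Gronwall to obtain
\beq
\Phi(-1) \le e^{K(t_0+1)}\Phi(t_0) + \int_{t_0}^{-1} e^{K(s+1)} R(s,\om)\,ds ,
\eeq
and let $t_0 \to -\infty$. Because $(U,V,Z)(t_0) = Q(t_0,\om)g_0$ with $\|g_0\|\le\rho(\om)$, one has $\Phi(t_0)\le C\,e^{-2\ve\om(t_0)}\rho(\om)^2$, so the first term is bounded by $C\,e^{K(t_0+1)-2\ve\om(t_0)}\rho(\om)^2$, whose exponent tends to $-\infty$ by Proposition \ref{sc9}(1); this selects a time $\tau(\rho,\om)\le -1$ beyond which that term is $\le 1$ for all admissible $g_0$. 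By the same temperedness the integral converges, as $t_0\to-\infty$, to the finite quantity $\int_{-\infty}^{-1} e^{K(s+1)}R(s,\om)\,ds$, which defines $r_0(\om)^2$ independently of $\rho$ and $g_0$. The equivalence of $\Phi$ and $\|G\|^2$ then yields \eqref{srd10}, and the uniform a priori bound simultaneously rules out finite-time blow-up, giving the asserted global existence on $[t_0,-1]$.

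The step I expect to be the main obstacle is the absorption bookkeeping in the energy inequality: controlling both the cubic self-interaction $U^3$ and the quadratic cross term $U^2 V$ with the single quartic dissipation $\|U\|_{L^4}^4$, while keeping the induced $\|V\|^2$-coefficient below the available linear damping, all \emph{without} imposing extra relations among $a,b,\beta$. Getting the weight $C_V$ small enough is exactly what makes this work, and the other delicate point is verifying that the accumulated forcing $R(t,\om)$ is genuinely tempered so that the pullback integral converges and the initial-data term vanishes.
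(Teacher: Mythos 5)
Your proposal is correct and follows essentially the same route as the paper: a weighted $L^2$ energy estimate in which the quartic dissipation $-\tfrac{b}{Q^2}\|U\|_{L^4}^4$ absorbs the cubic, cross, and lower-order terms (and, via $U^4 \gtrsim U^2 - \mathrm{const}$, supplies the missing linear damping in $U$), followed by Gronwall and the sublinear growth of $\om(t)$ to kill the initial-data term and guarantee convergence of the pullback forcing integral. The only cosmetic difference is your choice of weights (shrinking the $V$-equation by $C_V$ and cancelling the $Z$ cross-terms with the weight $1/q$) versus the paper's amplification of the $U$-equation by $c_1=\tfrac{1}{b}(\beta^2+3)$; both achieve the same absorption without constraints on $a,b,\beta$.
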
 
\begin{proof}
	Take the $L^2 (\gw)$ inner-product $\langle \eqref{sUq}, c_1 U) \rangle$, $\langle \eqref{sVq}, V) \rangle$ and $\langle \eqref{sZq}, Z) \rangle$ with constant $c_1 > 0$ to be determined later, we obtain the following:
	\beq \bl{srd11}
	\begin{split}
		&\frac{1}{2}\frac{d}{dt} \left(c_1 \|U\|^2+ \|V\|^2 + \|Z\|^2\right) + \left(c_1 d_1 \|\nb U\|^2 + d_2\|\nb V\|^2 + d_3\|\nb Z\|^2\right) \\
		= &\, \int_\gw c_1 \left(\frac{a}{Q(t,\om)}U^3 - \frac{b}{Q(t,\om)^2}U^4 +UV - UZ + JUQ(t,\om)\right)dx\\
		+ &\, \int_\gw \left(\alpha V Q(t,\om) - \frac{\beta }{Q(t,\om)}U^2 V - V^2 + q (U - c Q(t,\om))Z - rZ^2\right) dx \\
		\leq &\,  \int_\gw c_1 \left(\frac{a}{Q(t,\om)} U^3- \frac{b}{Q(t,\om)^2}U^4 +UV - UZ + JUQ(t,\om)\right)dx\\
		+ &\, \int_\gw \left\{\left(2\alpha^2 Q(t,\om)^2 + \frac{\beta^2}{2 Q(t,\om)^2} U^4 - \frac{3}{8}V^2 \right) + \left[\frac{q^2}{r}(U^2 + c^2 Q(t,\om)^2) - \frac{1}{2}rZ^2\right]\right\} dx.
	\end{split}
	\eeq
	Choose the positive constant in \eqref{srd11} to be $c_1 = \frac{1}{b}(\beta^2 + 3)$ so that 
	$$
	- c_1\int_\gw \frac{b}{Q(t,\om)^2}U^4\, dx + \int_\gw \frac{\beta^2}{Q(t,\om)^2} U^4\, dx \leq -3 \int_\gw \frac{U^4}{Q(t,\om)^2}\, dx.
	$$
	By Young's inequality, we have
	\begin{equation*}
	\begin{split}
		\int_\gw \frac{c_1 a}{Q(t,\om)} U^3\,dx &\,\leq \frac{3}{4}\int_\gw \frac{U^4}{Q(t,\om)^2}\,dx + \frac{1}{4} (c_1 a Q(t,\om))^4 |\gw | \\[3pt]
		&\,\leq \int_\gw \frac{U^4}{Q(t,\om)^2} \,dx + \left(c_1 a\, Q(t,\om)\right)^4 | \gw |,
	\end{split}
	\end{equation*}
	as well as
	\begin{equation} \bl{linU}
	\begin{split}
		\int_\gw c_1 (UV - UZ + JUQ(t,\om))\,dx &\,\leq \int_\gw \left[2(c_1 U)^2 + \frac{1}{8}V^2 + \frac{(c_1 U)^2}{r}+\frac{1}{4} rZ^2 \right. \\
		&\quad \quad \quad \left. +\frac{1}{2}(c_1 U)^2 + \frac{1}{2}J^2 Q(t,\om)^2\right]\,dx. 
	\end{split}
	\end{equation}
	Collecting those integral terms of $U^2$ on the right-hand side in \eqref{srd11} and in \eqref{linU}, we obtain
	\begin{gather*}
	\int_\gw \left[2(c_1U)^2 + \frac{(c_1 U)^2}{r}+ \frac{1}{2}(c_1 U)^2 + \frac{q^2}{r}U^2 \right]dx \\
	\quad \leq \int_\gw \frac{U^4}{Q(t,\om)^2} \,dx + \left[c_1^2 \left(\frac{5}{2} + \frac{1}{r} \right) + \frac{q^2}{r}\right]^2 Q(t,\om)^2 | \gw |.
	\end{gather*}
	Substitute the above inequalities with respect to the integral terms of $U^4, U^3$ and $U^2$ into \eqref{srd11}. Then we get
	\beq \bl{srd12}
	\begin{split}
		&\frac{1}{2}\frac{d}{dt} \left(c_1 \|U\|^2+ \|V\|^2 + \|Z\|^2\right) + \left(c_1 d_1 \|\nb U\|^2 + d_2\|\nb V\|^2 + d_3\|\nb Z\|^2\right) \\[3pt]
		\leq &\, \int_\gw \left[\frac{2- 3}{Q(t,\om)^2}U^4 + \left(\frac{1}{8} - \frac{3}{8}\right) V^2 +  \left( \frac{1}{4} - \frac{1}{2}\right) rZ^2\right]\,dx \\
		+ &\,\left[\frac{1}{2}J^2 + \left(c_1^2 \left(\frac{5}{2} + \frac{1}{r}\right) + \frac{q^2}{r} \right)^2 + 2\alpha^2 + \frac{q^2c^2}{r}\right] Q(t,\om)^2 |\gw | + (c_1 a)^4 Q(t,\om)^4 | \gw | \\
		\leq &\, - \int_\gw \left(\frac{1}{Q(t,\om)^2}U^4 (t,x) + \frac{1}{4} V^2 (t,x) + \frac{1}{4} rZ^2(t,x)\right) dx. \\[5pt]
		&\, + (c_1 a)^4 Q(t,\om)^4 | \gw | + c_2 \,Q(t,\om)^2 |\gw |,
	\end{split}
	\eeq
	where
	$$
	c_2 = \frac{1}{2}J^2 + \left[c_1^2 \left(\frac{5}{2} + \frac{1}{r}\right) + \frac{q^2}{r} \right]^2 + 2\alpha^2 + \frac{q^2c^2}{r}.
	$$
	Let $d = \min \{d_1, d_2, d_3\}$. Then the inequality \eqref{srd12} implies
	\begin{equation*}
	\begin{split}
	&\, \frac{d}{dt}  (c_1 \|U(t)\|^2 + \|V(t)\|^2 +\|Z(t)\|^2) + 2d (c_1 \|\nb u\|^2 + \|\nb v\|^2 + \|\nb w\|^2) \\
	&\, + \int_\gw \left(\frac{2}{Q(t,\om)^2}U^4 (t, x) + \frac{1}{2}V^2 (t,x) + \frac{1}{2}rZ^2(t,x)\right) dx \\[3pt]
	\leq &\; 2 c_2\, Q(t,\om)^2 | \gw | + 2 (c_1 a)^4 Q(t,\om)^4 \gw |.
	\end{split}
	\end{equation*}
	Moreover, we have
	$$
	\frac{2}{Q(t,\om)^2}U^4 \geq \frac{1}{2}\left(c_1 U^2 - \frac{c_1^2 Q(t,\om)^2}{16}\right).
	$$
	Therefore, 
	\begin{equation} \bl{srd13}
	\begin{split}
	&\, \frac{d}{dt} (c_1 \|U(t)\|^2 + \|V(t)\|^2 +\|Z(t)\|^2) + 2d (c_1 \|\nb U\|^2 + \|\nb V\|^2 + \|\nb Z\|^2) \\[3pt]
	&\, + \frac{1}{2} (c_1 \|U(t)\|^2 + \|V(t)\|^2 + r\|Z(t)\|^2)  \\
	\leq &\, \left(2 c_2 + \frac{1}{32}c_1^2\right)Q(t,\om)^2 | \gw | + 2 (c_1 a)^4 Q(t,\om)^4 |\gw |,
	\end{split}
	\end{equation}
	for $t \in [\tau, T_{max})$. Set $\sigma = \frac{1}{2} \min \{1, r\}$. Then the Gronwall inequality is applied to the reduced inequality \eqref{srd13}	,
	\begin{equation*}
	\begin{split}
	&\frac{d}{dt} (c_1 \|U(t)\|^2 + \|V(t)\|^2 +\|Z(t)\|^2) + \sigma (c_1 \|U(t)\|^2 + \|V(t)\|^2 + \|Z(t)\|^2) \\
	\leq &\, \left(2 c_2 + \frac{1}{32}c_1^2\right)Q(t,\om)^2 | \gw | + 2 (c_1 a)^4 Q(t,\om)^4 | \gw | 
	\end{split}
	\end{equation*}
	and shows that
	\beq \bl{srd14}
	\begin{split}
		&c_1 \|U(t)\|^2 + \|V(t)\|^2 +\|Z(t)\|^2 \leq e^{- \sigma (t - t_0)}(c_1 \|U_0\|^2 + \|V_0\|^2 +\|Z_0\|^2) \\
		+ &\,  \int_{-\infty}^t e^{- \sigma(t-s)} \left[\left(2 c_2 + \frac{1}{32}c_1^2\right)Q(s,\om)^2 | \gw | + 2 (c_1 a)^4 Q(s,\om)^4 | \gw | \right]\, ds, \; \;t \in [\tau, T_{max}).
	\end{split}
	\eeq
	We obtain
	\beq \bl{srd15}
	\begin{split}
			&\|U(t)\|^2 + \|V(t)\|^2 +\|Z(t)\|^2 \leq \frac{\text{max}\{c_1, 1\}}{\text{min}\{c_1, 1\}}\, e^{- \sigma (t - t_0)}(\|U_0\|^2 + \|V_0\|^2 +\|Z_0\|^2) \\
			+ &\, \frac{| \gw |}{\text{min}\{c_1, 1\}} \int_{-\infty}^t e^{- \sigma(t-s)} \left[\left(2 c_2 + \frac{1}{32}c_1^2\right)Q(s,\om)^2 + 2 (c_1 a)^4 Q(s,\om)^4 \right]\, ds.
	\end{split}
	\eeq
	Hence, the solutions of the initial value problem of the equation \eqref{sGq} satisfies the bounded estimate
	\beq \bl{srd16}
	\begin{split}
		&\|G(t,\om;t_0,Q(t_0,\om)g_0)\|^2 \leq \frac{\|Q(t_0,\om)\|^2 \text{max}\{c_1, 1\}}{\text{min}\{c_1, 1\}}\, e^{- \sigma (t - t_0)}\, \|g_0\|^2\\
	    &\; + \frac{| \gw |}{\text{min}\{c_1, 1\}} \int_{-\infty}^t e^{- \sigma(t-s)} \left[\left(2 c_2 + \frac{1}{32}c_1^2\right)Q(s,\om)^2 + 2 (c_1 a)^4 Q(s,\om)^4 \right]\, ds,\quad t \geq t_0.
	\end{split}
	\eeq
	Take $t = -1$ and substitute $Q(t,\om) = e^{- \ve \om (t)}$ into \eqref{srd16}. We then get
	\beq \bl{srd17}
	\begin{split}
		&\|G(-1,\om;t_0,Q(t_0,\om)g_0)\|^2 \leq \frac{ \text{max}\{c_1, 1\}}{\text{min}\{c_1, 1\}} e^{\sigma - \sigma |t_0| - 2\ve\, \om (t_0)} \|g_0\|^2 \\
		&\; + \frac{| \gw |}{\text{min}\{c_1, 1\}} \int_{-\infty}^{-1} e^{\sigma + \sigma s} \left[\left(2 c_2 + \frac{1}{32}c_1^2\right)e^{- 2 \ve \om (s)} + 2 (c_1 a)^4 e^{- 4 \ve \om (s)} \right] ds.
	\end{split}
	\eeq
	Note that
	$$
	e^{- \sigma |t_0| - 2\ve\om (t_0)} = \text{exp}\left(-\sigma |t_0|\left[1 + \frac{2\ve\, \om (t_0)}{\sigma |t_0|}\right]\right) = \text{exp}\left(-\sigma |t_0|\left[1 - \frac{2\ve \,\om (t_0)}{\sigma t_0}\right]\right). 
	$$
	By the asymptotically sublinear property \eqref{srd7}, for any given random variable $\rho(\om) > 0$ and for a.e. $\om \in \mathfrak{Q}$, there exist a time $\tau(\rho,\om) \leq -1$ such that for any $t_0 \leq \tau(\rho,\om)$, we have 
	\beq \bl{srd18}
	1 - \frac{2\ve\om (t_0)}{\sigma t_0} \geq \frac{1}{2}\quad \text{and}\quad e^{\sigma (1 - \frac{1}{2}|t_0|)}\frac{ \text{max}\{c_1, 1\}}{\text{min}\{c_1, 1\}}\rho^2(\om) \leq 1.
	\eeq
	Therefore, from \eqref{srd17}, we obtain
	\beq \bl{srd19}
	\|G(-1,\om;t_0,Q(t_0,\om)g_0)\| \leq r_0(\om),\quad \text{a.s}.
	\eeq
	where
	\beq \bl{srd20}
	r_0 (\om) = \sqrt{1 + \frac{| \gw |}{\text{min}\{c_1, 1\}} \int_{-\infty}^{-1} e^{\sigma + \sigma s} \left[\left(2 c_2 + \frac{1}{32}c_1^2\right)e^{- 2 \ve \om (s)} + 2 (c_1 a)^4 e^{- 4 \ve \om (s)} \right] ds}
	\eeq
	in which both integrals 
	$$
	\int_{-\infty}^{-1} e^{\sigma + \sigma s} \left(2 c_2 + \frac{1}{32}c_1^2\right)e^{- 2 \ve \om (s)} \, ds \quad \text{and} \quad  \int_{-\infty}^{-1} 2 e^{\sigma + \sigma s} (c_1 a)^4 e^{- 4 \ve \om (s)}\, ds
	$$ 
	are convergent due to the asymptotically sublinear growth property \eqref{srd7}. 
	
	Therefore, the weak solution $G(t,\om;t_0,Q(t_0,\om)g_0)$ of the problem \eqref{sGq} uniquely exists on $[t_0, -1]$. The proof is completed.
\end{proof}

\begin{lemma} \label{theo2}
	There exists a random variable $R_0(\om)>0$ depending only on the parameters such that, for any given random variable $\rho(\om)>0$, the following statement holds. For any $t_0 \leq \tau(\rho,\om)$ specified in Lemma \ref{theo1} and any initial data $g_0=(u_0, v_0, z_0) \in H$ with $\|g_0\| \leq \rho(\om)$, the weak solution $G(t,\om;t_0, Q(t_0,\om)g_0)$ of the initial value problem \eqref{sGq} with $G(t_0,\om)=Q(t_0,\om)g_0$ uniquely exists on $[t_0,\infty)$ and satisfies
	\beq \bl{srd21}
	\|G(0,\om; t_0, Q(t_0,\om)g_0)\|^2 + \int_{-1}^{0} \|\nb G(s,\om;t_0,Q(t_0,\om)g_0)\|^2\, ds \leq R_0^2(\om), \;\;  \om \in \mathfrak{Q}.
	\eeq
\end{lemma}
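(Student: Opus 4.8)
The plan is to bootstrap from Lemma \ref{theo1}: the dissipative differential inequality \eqref{srd13} obtained there already holds on the entire maximal interval of existence, and an a priori bound derived from it will simultaneously deliver global existence and the refined estimate \eqref{srd21}. First I would settle global existence. By the local theory recalled after \eqref{sGq}, the weak solution exists on a maximal interval $[t_0, T_{max})$ and, were $T_{max}$ finite, we would necessarily have $\|G(t,\om)\| \to \infty$ as $t \to T_{max}^-$. But the uniform bound \eqref{srd14} (equivalently \eqref{srd16}) controls $\|G(t,\om)\|$ on all of $[t_0, T_{max})$ by $\|g_0\|$ together with a convergent integral of $Q(s,\om)$, which is finite on every bounded time window because $Q(s,\om)=e^{-\ve\om(s)}$ is pathwise continuous. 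This precludes blow-up, so $T_{max}=\infty$ and the solution is global on $[t_0,\infty)$; in particular it exists on $[-1,0]$.

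Writing $\Phi(t)=c_1\|U(t)\|^2+\|V(t)\|^2+\|Z(t)\|^2$, the key step is to integrate \eqref{srd13} over the fixed interval $[-1,0]$ while \emph{retaining} the diffusion term $2d(c_1\|\nb U\|^2+\|\nb V\|^2+\|\nb Z\|^2)$, rather than absorbing it into a Gronwall step as was done in Lemma \ref{theo1}. Discarding only the nonnegative lower-order term $\frac{1}{2}(c_1\|U\|^2+\|V\|^2+r\|Z\|^2)$ and integrating gives
\[
\Phi(0)+2d\int_{-1}^{0}\left(c_1\|\nb U\|^2+\|\nb V\|^2+\|\nb Z\|^2\right)ds \;\leq\; \Phi(-1)+\int_{-1}^{0} h(s,\om)\,ds,
\]
where $h(s,\om)=(2c_2+\frac{1}{32}c_1^2)Q(s,\om)^2|\gw|+2(c_1a)^4 Q(s,\om)^4|\gw|$. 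The initial term is handled by the absorbing bound of Lemma \ref{theo1}, namely $\Phi(-1)\le \max\{c_1,1\}\,\|G(-1,\om)\|^2\le \max\{c_1,1\}\,r_0^2(\om)$, and the forcing integral $\int_{-1}^{0}h(s,\om)\,ds$ is a finite random variable since $Q(\cdot,\om)$ is continuous on the compact interval $[-1,0]$. Using the comparisons $\min\{c_1,1\}\|G\|^2\le\Phi\le\max\{c_1,1\}\|G\|^2$ and $c_1\|\nb U\|^2+\|\nb V\|^2+\|\nb Z\|^2\ge\min\{c_1,1\}\|\nb G\|^2$ and dividing by $\min\{c_1,1\}\min\{1,2d\}$ isolates both $\|G(0,\om)\|^2$ and $\int_{-1}^{0}\|\nb G(s,\om)\|^2\,ds$ with coefficient one on the left, which yields \eqref{srd21} with
\[
R_0^2(\om)=\frac{1}{\min\{c_1,1\}\min\{1,2d\}}\left[\max\{c_1,1\}\,r_0^2(\om)+\int_{-1}^{0}h(s,\om)\,ds\right].
\]

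The only genuinely new idea beyond Lemma \ref{theo1} is the choice to keep the diffusion term, since this is exactly what produces the $L^2([-1,0];E)$ gradient bound that will feed the pullback asymptotic compactness argument in Section 3. The main thing to watch is therefore not any new a priori estimate but the bookkeeping: tracking the equivalence constants between $\Phi$ and $\|G\|^2$ and between the weighted and unweighted gradient norms, and checking that $R_0(\om)$ is a well-defined, finite random variable. The latter reduces once more to the pathwise continuity of $Q(\cdot,\om)$ on $[-1,0]$ and, for the temperedness required by membership in $\mathscr{D}_H$, to the sublinear growth property \eqref{srd7}.
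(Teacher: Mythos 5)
Your proposal is correct and follows essentially the same route as the paper: integrate the dissipative inequality \eqref{srd13} forward from $t=-1$ while retaining the diffusion term, control $\|G(-1,\om)\|$ by $r_0(\om)$ from Lemma \ref{theo1}, bound the forcing integral by pathwise continuity of $Q(\cdot,\om)$, and divide by $\min\{c_1,1\}\min\{1,2d\}$; your resulting $R_0^2(\om)$ coincides with \eqref{srd25}. The only cosmetic difference is that the paper integrates over $[-1,t]$ for arbitrary $t>-1$ to rule out blow-up on every $[-1,T]$ before setting $t=0$, whereas you phrase the same non-blow-up argument via the bound \eqref{srd16} on the maximal interval.
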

\begin{proof}
	Based on Lemma \ref{theo1} and the local extension of the solutions of the problem \eqref{sGq} from the time $t_1 = - 1$ forward, we can integrate the inequality \eqref{srd13} over $[-1, t]$ to get
	\beq \bl{srd22}
	\begin{split}
		&c_1 \|U(t)\|^2 + \|V(t)\|^2 +\|Z(t)\|^2 - (c_1 \|U(-1)\|^2 + \|V(-1)\|^2 +\|Z(-1)\|^2) \\[3pt]
		&\, + 2d \int_{-1}^{t}\left(c_1 \|\nb U(s)\|^2 + \|\nb V(s)\|^2 + \|\nb Z(s)\|^2\right)\, ds \\
		&\, + \sigma \int_{-1}^{t}\left(c_1 \|U(s)\|^2 + \|V(s)\|^2 + \|Z(s)\|^2\right)\, ds  \\
		\leq &\, | \gw | \int_{-1}^{t} \left[\left(2 c_2 + \frac{1}{32}c_1^2\right)Q(s,\om)^2  + 2 (c_1 a)^4 Q(s,\om)^4 \right] ds, \quad t > -1.
	\end{split}
	\eeq
	Then 
	\beq \bl{srd23}
	\begin{split}
		& \|G(t,\om;t_0,Q(t_0,\om)g_0)\|^2 + 2d \int_{-1}^{t} \|\nb G(s,\om;t_0,Q(t_0,\om)g_0)\|^2\, ds\\[3pt]
		\leq &\, \frac{\text{max}\{c_1, 1\}}{\text{min}\{c_1, 1\}} \|G(-1,\om;t_0,Q(t_0,\om)g_0)\|^2 \\
		&\, + \frac{| \gw |}{\text{min}\{c_1, 1\}} \int_{-1}^{t} \left[\left(2 c_2 + \frac{1}{32}c_1^2\right)Q(s,\om)^2  + 2 (c_1 a)^4 Q(s,\om)^4 \right] ds.
	\end{split}
	\eeq
	The inequality \eqref{srd23} together with Lemma \ref{theo1} shows that for $\om \in \mathfrak{Q}$ and any $T> -1$, the weak solution $G(t,\om;t_0, Q(t_0,\om)g_0) \in C[t_0, T; H) \cap L^2(t_0, T; E)$ uniquely exists for $t \in [-1, T]$ and will not blow up. In particular, let $t=0$ in \eqref{srd23} and we obtain 
	\beq \bl{srd24}
	\|G(0,\om;t_0,Q(t_0,\om)g_0)\|^2 + \int_{-1}^{0} \|\nb G(s,\om;t_0,Q(t_0,\om)g_0)\|^2\, ds \leq R_0^2(\om), 
	\eeq
	where
	\beq \bl{srd25}
	\begin{split}
		&R_0^2(\om) = \frac{1}{\text{min}\{1, 2d\} \text{min}\{c_1, 1\}}\\[3pt]
		&\quad \times \left\{\text{max}\{c_1, 1\} |r_0(\om)|^2 + | \gw |\int_{-1}^{0} \left[\left(2 c_2 + \frac{1}{32}c_1^2\right)Q(s,\om)^2  + 2 (c_1 a)^4 Q(s,\om)^4 \right] ds \right\}
	\end{split}	
	\eeq
	where $r_0(\om)$ is defined in \eqref{srd20}. Note that $r_0(\om)$ and $R_0(\om)$ are both independent of random variables $\rho(\om)$.
\end{proof}

\begin{remark}
	We can certainly merge the above two lemmas into one which gives rise to the bounded estimate \eqref{srd24}. Here we split the time interval $[t_0, 0]$ to $[t_0, -1] \cup [-1, 0]$ in order to facilitate the argument in the proof of the pullback asymptotic compactness of the associated random dynamical system later in Section 3.
\end{remark}

\subsection{\textbf{Hindmarsh-Rose Cocycle and Absorbing Property}}

Now define a concept of stochastic semiflow, which is related to the concept of cocycle in the theory of random dynamical systems.

\begin{definition} \bl{SSM}
Let $(\mathfrak{Q}, \mathcal{F}, P, \{\theta_t\}_{t\in \mathbb{R}})$ be a metric dynamical system. A family of mappings $S(t,\tau,\om): X \to X$ for $t \geq \tau \in \mathbb{R}$ and $\om \in \mathfrak{Q}$ is called a \emph{stochastic semiflow} on a Banach space $X$, if it satisfies the properties:

	(i)\, $S(t,s, \om) S(s,\tau,\om) = S(t,\tau,\om)$, for all $\tau \leq s \leq t$ and $\om \in \mathfrak{Q}$.

	(ii) $S(t, \tau, \om) = S(t-\tau, 0, \theta_\tau \om)$, for all $\tau \leq t$ and $\om\in \mathfrak{Q}$.

	(iii) The mapping $S(t,\tau,\om) x$ is measurable in $(t,\tau,\om)$ and continuous in $x \in X$.
\end{definition}

Here in the setting of the stochastic evolutionary equation \eqref{sGq} formulated from the stochastic Hindmarsh-Rose equations \eqref{suq}-\eqref{inc1}, we define $S(t,\tau,\om): H \to H$ for $t \geq \tau \in \mathbb{R}$ and $\om \in \mathfrak{Q}$ by
\beq \bl{ssm}
	S(t, \tau, \om) \, g_0 = \frac{1}{Q(t, \om)}\, G(t,\,  \om; \, \tau, \, G_0) = \begin{pmatrix}
	u \\
	v \\
	z
	\end{pmatrix} 
	(t,\,\om; \, \tau, g_0)
\eeq
and then define a mapping $\Phi: \mathbb{R}^+ \times \mathfrak{Q} \times H \to H$, where $\mathbb{R}^+ = [0, \infty)$, to be
\beq \bl{cyl}
	\Phi (t - \tau, \, \theta_\tau \om, \, g_0) = S(t, \tau, \om)\, g_0
\eeq
which is equivalent to
\beq \bl{phi}
	\Phi (t,\, \om, \, g_0) = S(t, 0,  \om) g_0 = \frac{1}{Q(t,\om)}\, G(t, \,\om; \, 0, \, G_0).
\eeq
The following lemma shows that this mapping $\Phi$ is a cocycle on the Hilbert space $H$ over the canonical metric dynamical system $(\mathfrak{Q}, \mathcal{F}, P, \{{\theta_t}\}_{t \in \mathbb{R}})$ specified in \eqref{scrd5} and \eqref{srd6}. Therefore, the following \emph{pullback identity} is validated:
\beq \bl{vpt}
	\Phi (t, \theta_{-t} \om, g_0) = S(0, - t, \om) g_0 = \frac{1}{Q(0,\om)} G (0, \, \om; \, -t, \, G_0) = g (0, \, \om; \, -t, \, g_0)
\eeq
for any $t \geq 0$ and $\om \in \mathfrak{Q}$. We shall call this mapping $\Phi$ defined by \eqref{cyl} the \emph{Hindmarsh-Rose cocycle}, which is a random dynamical system on the Hilbert space $H$. We shall call $\{\Phi (t, \theta_{-t} \om, g_0): t \geq 0\}$ a \emph{pullback quasi-trajectory} with the initial state $g_0$ for the Hindmarsh-Rose cocycle.

\begin{remark}
Here the pullback quasi-trajectory $\{\Phi (t, \ctw, g_0), t \geq 0\}$ is not a single trajectory but the set of all the points at time $t = 0$ of the bunch of trajectories started from the same initial state $g_0$ but at different pullback initial time $-t$. 
\end{remark}

\begin{lemma} \bl{SPhi}
     The mapping $\Phi:  \mathbb{R}^+ \times \mathfrak{Q} \times H \to H$ defined by \eqref{ssm} and \eqref{cyl} is a cocycle on the space $H$ over the canonical metric dynamical system $(\mathfrak{Q}, \mathcal{F}, P, \{{\theta_t}\}_{t \in \mathbb{R}})$. Moreover, the one-parameter operators 
\beq \bl{Pit}
     (\Pi_t g)(\om) = \Phi (t, \theta_{-t} \om, g (\theta_{-t} \om)), \quad t \geq 0,
\eeq
where $\{g(\om): \om \in \mathfrak{Q}\}$ can be any $H$-valued random set on the probability space $(\mathfrak{Q}, \mathcal{F}, P)$, turns out to be a semigroup of operators on the $H$-valued random sets.
\end{lemma}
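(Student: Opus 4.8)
The plan is to derive both assertions from a single scaling--conjugacy identity for the random-PDE flow, combined with the uniqueness of weak solutions from Lemmas \ref{theo1}--\ref{theo2}. Write $G(t,\om;\tau,G_\tau)$ for the unique weak solution of \eqref{sGq} with $G(\tau)=G_\tau$. Uniqueness at once yields the evolution (flow) property $G(t,\om;\tau,G_\tau)=G(t,\om;s,G(s,\om;\tau,G_\tau))$ for $\tau\le s\le t$, and the well-posedness and continuous dependence recorded after \eqref{sGq} supply the joint measurability in $(t,\tau,\om)$ and the strong continuity in the initial datum. Inserting these into the definition \eqref{ssm} of $S(t,\tau,\om)$, the factors $Q(s,\om)$ and $1/Q(s,\om)$ introduced at the intermediate time cancel, so property (i) of Definition \ref{SSM} holds, and the measurability and continuity requirements of Definition \ref{sc2} are met.

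The crux is the shift property $S(t,\tau,\om)=S(t-\tau,0,\zttw)$, where the multiplicative structure must be unwound. First I would record the cocycle-type identity for the conjugation factor,
\[
Q(s+\tau,\om)=Q(\tau,\om)\,Q(s,\zttw),\qquad \frac{Q(\tau,\om)}{Q(t,\om)}=\frac{1}{Q(t-\tau,\zttw)},
\]
both immediate from $Q(t,\om)=e^{-\ve\om(t)}$ and $(\zttw)(s)=\om(s+\tau)-\om(\tau)$ in \eqref{srd6}. Using the first relation I would check, by substitution into \eqref{sUq}--\eqref{sZq}, that $s\mapsto Q(\tau,\om)\,G(s,\zttw;0,g_0)$ solves exactly the same random system as $s\mapsto G(s+\tau,\om;\tau,\cdot)$: multiplying the $\zttw$-system by the constant $Q(\tau,\om)$, the quadratic term (weighted by $1/Q$), the cubic term (weighted by $1/Q^2$) and the forcing terms (weighted by $Q$) each have exactly the homogeneity needed for the first identity to convert every coefficient $Q(s,\zttw)$ into $Q(s+\tau,\om)$, while the linear terms carry no factor of $Q$ and transform trivially. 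Since both functions equal $Q(\tau,\om)g_0$ at $s=0$, uniqueness gives
\[
G(t,\om;\tau,Q(\tau,\om)g_0)=Q(\tau,\om)\,G(t-\tau,\zttw;0,g_0).
\]

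Dividing by $Q(t,\om)$ in \eqref{ssm} and invoking the second conjugation identity then gives $S(t,\tau,\om)g_0=S(t-\tau,0,\zttw)g_0$, which is property (ii). From here the cocycle identity of Definition \ref{sc2}(ii) for $\Phi$ is purely algebraic: by \eqref{phi}, $\Phi(t+s,\om,g_0)=Q(t+s,\om)^{-1}G(t+s,\om;0,g_0)$; applying the flow property at the intermediate time $s$ and then the scaling--conjugacy relation (with $\tau=s$) to rewrite $G(t+s,\om;s,\cdot)$ as $Q(s,\om)\,G(t,\zsw;0,\cdot)$, and using $Q(s,\om)/Q(t+s,\om)=1/Q(t,\zsw)$, one obtains $\Phi(t+s,\om,g_0)=\Phi(t,\zsw,\Phi(s,\om,g_0))$. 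Property (i), $\Phi(0,\om,\cdot)=\mathrm{id}$, is just the initial condition in \eqref{sGq}. Hence $\Phi$ is a cocycle on $H$.

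Finally, for the semigroup claim I would expand $\Pi_{t+s}$ and $\Pi_t\Pi_s$ from \eqref{Pit} and match them using only the group law $\theta_{-s}\theta_{-t}=\theta_{-(s+t)}$ of the MDS together with the cocycle identity just proved. Indeed $(\Pi_s g)(\ctw)=\Phi\big(s,\theta_{-(s+t)}\om,\,g(\theta_{-(s+t)}\om)\big)$; writing $\bar\om=\theta_{-(s+t)}\om$ so that $\theta_s\bar\om=\ctw$, the cocycle identity with base point $\bar\om$ turns $(\Pi_t(\Pi_s g))(\om)=\Phi(t,\ctw,(\Pi_s g)(\ctw))$ into $\Phi(t+s,\bar\om,g(\bar\om))=(\Pi_{t+s}g)(\om)$, and $\Pi_0=\mathrm{id}$ is clear. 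I expect the only real obstacle to be the bookkeeping in the scaling--conjugacy step: one must confirm that the exact powers of $Q$ attached to each nonlinear and forcing term are precisely those for which the constant $Q(\tau,\om)$ cancels, since a mismatch in a single coefficient would destroy the identity; everything else is a formal consequence of uniqueness and the shift-group law.
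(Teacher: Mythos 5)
Your proof is correct, and its skeleton coincides with the paper's: the cocycle identity for $\Phi$ is reduced to the shift property (ii) of Definition \ref{SSM} for the semiflow $S$, and the semigroup law for $\Pi_t$ then follows from the cocycle identity together with the group law $\theta_{-t}\theta_{-\gs}=\theta_{-(t+\gs)}$, exactly as in \eqref{stg}. The genuine difference is where the burden of proof is placed. The paper's computation simply invokes ``the 2nd condition of Definition \ref{SSM}'' at the step $S(t+s,s;\om)=S(t,0;\zsw)$ and relegates its justification to an informal remark about the stationarity of the Wiener increments; it never verifies that $S$ defined by \eqref{ssm} actually satisfies that condition. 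You supply precisely this missing verification: from $Q(t,\om)=e^{-\ve\om(t)}$ and \eqref{srd6} you derive the conjugation identity $Q(s+\tau,\om)=Q(\tau,\om)\,Q(s,\zttw)$, check that each coefficient of \eqref{sUq}--\eqref{sZq} (the $U^2/Q$, $U^3/Q^2$, and $Q$-weighted forcing terms) has exactly the homogeneity needed for $s\mapsto Q(\tau,\om)G(s,\zttw;0,g_0)$ to solve the $\om$-system started at time $\tau$, and conclude $G(t,\om;\tau,Q(\tau,\om)g_0)=Q(\tau,\om)G(t-\tau,\zttw;0,g_0)$ by uniqueness. That bookkeeping does close correctly (I verified all three equations), so your argument is not merely equivalent to the paper's but strictly more complete at the one point where the paper is weakest. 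The remainder --- $\Phi(0,\om,\cdot)=\mathrm{id}$ from the initial condition in \eqref{sGq}, and the $\Pi_t$ computation --- matches the paper line for line.
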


\begin{proof}
First we check the cocycle property of the mapping $\Phi$,
\beq \bl{ccy}
	\Phi (t + s, \om, g_0) = \Phi (t, \theta_s \, \om, \Phi (s, \om, g_0)), \quad  t \geq 0, \, s \geq 0, \, \om \in \mathfrak{Q},
\eeq
is satisfied by this mapping $\Phi$. Since we have \eqref{phi},
$$
	\Phi (t+s, \om, g_0) = \frac{1}{Q(t + s, \om)} G(t+s, \om; \, 0, \, G_0) 
$$
and, on the other hand,
\begin{align*} 
	&\Phi (t, \theta_s \, \om, \Phi (s, \om, g_0)) = \frac{1}{Q(t, \om)} G (t, \zsw; \, 0, \, G(s, \om; 0, G_0))  \quad (\tup{by}\, \eqref{phi}) \\
	= &\, g(t, \zsw; \, 0, \, g(s, \om; 0, g_0)) =  S(t, 0; \zsw)\, g(s, \om; 0, g_0)  \quad (\tup{by}\, \eqref{ssm}) \\[4pt]
	= &\, S(t, 0; \zsw)\, S(s, 0; \om) g_0  = S (t+s-s, 0; \zsw)\, S(s, 0; \om) g_0  \\[4pt]
	= &\, S(t+s, s; \om)\, S(s, 0; \om) g_0  \qquad (\tup{by the 2nd condition of Definition \ref{SSM}}) \\
	= &\, S(t+s, 0; \om)\, g_0 = \frac{1}{Q(t+s, \om)} G(t+s, \om; 0, G_0).  
\end{align*} 
Therefore, the cocycle property \eqref{ccy} of the mapping $\Phi$ is valid by comparison of the above two equalities. 	

The second claim that $\{\Pi_t\}_{t \geq 0}$ is a semigroup can be shown as follows,
\beq \bl{stg}
	\begin{split}
	(\Pi_t \, [\Pi_\gs \, g])(\om) &= \Phi (t, \, \theta_{-t} \om, \, [\Pi_\gs \, g] (\theta_{-t} \om)) \\[2pt]
	& = \Phi (t, \, \theta_{-t} \om, \, \Phi (\gs, \theta_{-\gs} (\theta_{-t} \om), g (\theta_{-\gs} (\theta_{-t} \om))) \\[2pt]
	& = \Phi (t, \, \theta_{-t} \om, \, \Phi (\gs, \theta_{-(t +\gs)} \om, g (\theta_{-(t+\gs)} \om))) \\[2pt]
	& = \Phi (t, \, \theta_{-(t+\gs)} \theta_{\gs} \om, \, \Phi (\gs, \theta_{-(t +\gs)} \om, g (\theta_{-(t+\gs)} \om))) \\[2pt]
	& = \Phi (t, \, \theta_{\gs} \theta_{-(t+\gs)} \om, \, \Phi (\gs, \theta_{-(t +\gs)} \om, g (\theta_{-(t+\gs)} \om))) \\[2pt]
	& = \Phi (t + \gs, \, \theta_{-(t +\gs)} \om, \, g (\theta_{-(t+\gs)} \om)) = (\Pi_{t + \gs} \, g)(\om), \;\; t, \gs \geq 0.
	\end{split}
\eeq
where the final equality follows from the cocycle property of $\Phi$ already proved.
\end{proof}

\begin{remark}
Apparently when the stochastic PDEs \eqref{suq}-\eqref{szq} are converted to the random PDEs \eqref{sUq}-\eqref{sZq} by the exponential multiplication \eqref{etrans}, we see the coefficients are time-depending random variables instead of constants,  which means the system \eqref{sGq} is nonautonomous in time. The justification for the corresponding stochastic semiflow \eqref{ssm} to be well-defined and satisfy the stationary property in Definition \ref{SSM} is due to the stationary property possessed by the underlying Wiender process $\{W(t)\}_{t \in \mathbb{R}}$, which is characterized by the stationary increment $W(t) - W(s)$ of the Gaussian distribution with mean zero and variance $t - s$, in the problem setting.
\end{remark}

\begin{theorem} \bl{theo3}
	There exists a pullback absorbing set in the space $H$ with respect to the tempered universe $\mathscr{D}_H$ for the Hindmarsh-Rose cocycle $\Phi$, which is the bounded random ball
	\beq \bl{srd26} 
	B_0(\om) = B_H(0, R_0(\om)) = \{\xi \in H: \|\xi\| \leq R_0(\om)\}
	\eeq
	where $R_0(\om)$ is given in \eqref{srd25}.
\end{theorem}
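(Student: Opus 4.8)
The plan is to verify the two defining requirements of Definition \ref{sc5}: that the random ball $B_0 = \{B_0(\om)\}$ itself lies in the tempered universe $\mathscr{D}_H$, and that it pullback-absorbs every tempered random set $B \in \mathscr{D}_H$. The whole argument rests on the pullback identity \eqref{vpt}, which for $g_0 \in H$ reads
$$
\Phi(t,\ctw,g_0) = g(0,\om;-t,g_0) = G(0,\om;-t,Q(-t,\om)g_0),
$$
the last equality because $Q(0,\om)=e^{-\ve\om(0)}=1$. Thus bounding a pullback quasi-trajectory at time $0$ is exactly bounding the transformed solution $G(0,\om;\cdot)$, which is precisely the quantity that Lemmas \ref{theo1} and \ref{theo2} control.

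Next I would fix $\om \in \mfq$ and an arbitrary tempered $B = \{B(\om)\} \in \mathscr{D}_H$, and take any datum $g_0 \in B(\ctw)$, so $\|g_0\| \leq \interleave B(\ctw)\interleave$. Setting the initial time to $-t$ in the a priori bound \eqref{srd16} and reading off the value at final time $0$, I would split $\|\Phi(t,\ctw,g_0)\|^2 = \|G(0,\om;-t,Q(-t,\om)g_0)\|^2$ into the homogeneous decay term
$$
\frac{\max\{c_1,1\}}{\min\{c_1,1\}}\,e^{-2\ve\om(-t)}\,e^{-\gs t}\,\|g_0\|^2
$$
and the remaining integral term, which is independent of $t$ and $g_0$ and which is a finite quantity absorbed into $R_0^2(\om)$ through \eqref{srd20}--\eqref{srd25}. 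The crux is that the homogeneous term vanishes as $t \to \infty$: writing $e^{-\gs t - 2\ve\om(-t)} = e^{-\gs t(1 - 2\ve\om(-t)/(\gs t))}$ and invoking the asymptotically sublinear growth \eqref{srd7} of $W$, the exponent is $\leq -\tfrac12\gs t$ once $t$ is large, so the term is dominated by $\tfrac{\max\{c_1,1\}}{\min\{c_1,1\}}e^{-\frac12\gs t}\interleave B(\ctw)\interleave^2$; since $B$ is tempered, $e^{-\frac14\gs t}\interleave B(\ctw)\interleave \to 0$ and hence the whole term tends to $0$. This reproduces exactly the two conditions in \eqref{srd18} with the fixed radius $\rho(\om)$ replaced by the $t$-dependent radius $\interleave B(\ctw)\interleave$, so the conclusions of Lemmas \ref{theo1}--\ref{theo2} apply and furnish a finite $T_B(\om) > 0$ such that for all $t \geq T_B(\om)$ one has $\|\Phi(t,\ctw,g_0)\| \leq R_0(\om)$, i.e. $\Phi(t,\ctw,B(\ctw)) \subset B_0(\om)$.

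Finally I would check $B_0 \in \mathscr{D}_H$. Measurability of the random ball is immediate since $R_0(\om)$ is a random variable and $\om \mapsto \dist(x,B_0(\om))$ is measurable. The point requiring attention is temperedness: one must establish $\lim_{t\to\infty}e^{-\beta t}R_0(\ctw) = 0$ for every $\beta > 0$. This follows from the explicit form \eqref{srd20}--\eqref{srd25}, in which $R_0(\om)$ is built from a convergent integral of $e^{\gs s}(e^{-2\ve\om(s)}+e^{-4\ve\om(s)})$; applying the shift $\om \mapsto \ctw$ and again using the sublinear growth \eqref{srd7} bounds $R_0(\ctw)$ subexponentially in $t$, giving temperedness.

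I expect the main obstacle to be the homogeneous-term estimate: unlike in Lemmas \ref{theo1}--\ref{theo2}, the initial radius $\interleave B(\ctw)\interleave$ is no longer a fixed constant but grows with $t$, so the cancellation of $e^{-\gs t}$ against $\interleave B(\ctw)\interleave^2$ is exactly where the temperedness hypothesis on $B$ is indispensable, and it must be coordinated with the sublinear growth of $\om(-t)$ to tame the stochastic factor $e^{-2\ve\om(-t)}$. Verifying temperedness of $R_0$ itself is the secondary technical point.
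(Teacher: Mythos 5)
Your proposal is correct and follows essentially the same route as the paper: both arguments apply the a priori bound \eqref{srd16} with pullback initial time $-t$, split off the homogeneous term $e^{-\gs t - 2\ve\om(-t)}\|g_0\|^2$, and kill it by combining the sublinear growth \eqref{srd7} of $\om$ with the temperedness of the absorbed set $B$ (the paper merely routes through the intermediate time $-1$ via Lemmas \ref{theo1}--\ref{theo2} so as to land exactly on the constant $R_0(\om)$ of \eqref{srd25}, whereas you evaluate \eqref{srd16} directly at time $0$). Your explicit verification that $B_0$ itself is tempered, hence belongs to $\mathscr{D}_H$, is a point the paper leaves implicit and is a welcome addition.
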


\begin{proof}
For any bounded random ball $B(\om) = B_H(0, \rho(\om)) \in \mathscr{D}_H$ and any $g_0 \in B(\theta_{-t}\om)$, by Definition \ref{sc3} we have
\beq \bl{srd27}
\lim_{t \to -\infty} e^{-\beta t} \rho(\theta_{-t} \om) = 0,\quad \text{for any} \; \beta >0.
\eeq
From \eqref{srd16}, for $-t \leq -1$ we have
\begin{equation*}
	\begin{split}
	 \sup_{g_0 \in B(\theta_{-t}\om)} &\; \|G(-1, \om; -t, Q(-t,\om)g_0)\|^2 \leq \frac{\|Q(-t,\om)\|^2 \text{max}\{c_1, 1\}}{\text{min}\{c_1, 1\}} e^{ \sigma (1 - t)} \|g_0\|^2 \\[3pt]
	&\; + \frac{| \gw|}{\text{min}\{c_1, 1\}} \int_{-\infty}^{-1} e^{\sigma(1+s)} \left[\left(2 c_2 + \frac{1}{32}c_1^2\right)Q(t,s)^2 + 2 (c_1 a)^4 Q(t,s)^4 \right] ds\\[3pt]
	\leq &\;\frac{e^{\sigma} \text{max}\{c_1, 1\}}{\text{min}\{c_1, 1\}} e^{-2\ve \om(-t) - \sigma t} \rho^2(\theta_{-t}\om)\quad\quad\quad  \left(\text{since}\; g_0 \in B(\theta_{-t}\om)\right)  \\
	&\; + \frac{| \gw |}{\text{min}\{c_1, 1\}} \int_{-\infty}^{-1} e^{\sigma(1+s)} \left[\left(2 c_2 + \frac{1}{32}c_1^2\right) e^{-2 \ve \om(s)} + 2 (c_1 a)^4 e^{-4 \ve \om(s)} \right] ds. \\
	\leq&\; \frac{e^{\sigma} \text{max}\{c_1, 1\}}{\text{min}\{c_1, 1\}} \, \text{exp} \left[-\frac{\sigma t}{2} \left(1 - \frac{4 \ve}{\sigma} \left(\frac{\om(-t)}{-t}\right)\right)\right] e^{-\frac{\sigma t}{2}} \rho^2(\theta_{-t}\om) \\
	&\; + \frac{| \gw|}{\text{min}\{c_1, 1\}} \int_{-\infty}^{-1} e^{\sigma(1+s)} \left[\left(2 c_2 + \frac{1}{32}c_1^2\right) e^{-2 \ve \om(s)} + 2 (c_1 a)^4 e^{-4 \ve \om(s)} \right] ds.
	\end{split}
\end{equation*}
From \eqref{srd7}, we have
$$
\lim_{t \to \infty} \text{exp} \left[-\frac{\sigma t}{2} \left(1 - \frac{4 \ve}{\sigma} \left(\frac{\om(-t)}{-t}\right)\right)\right] = 0, \quad \om \in \mathfrak{Q}. 
$$
Since $B(\om) = B_H(0, \rho(\om)) \in \mathscr{D}_H$, the radius $\rho(\theta_{-t}\om)$ is a tempered random variable, so that 
$$
\lim_{t \to \infty} e^{-\frac{\sigma t}{2}} \rho^2(\theta_{-t}\om) = \lim_{t \to \infty} |e^{-\frac{\sigma t}{4}} \rho(\theta_{-t}\om)|^2 = 0.
$$
Therefore, there exists a finite random variable $T_B(\om) > 1$ such that for all $t \geq T_B(\om)$ we have 
$$
\frac{e^{\sigma} \text{max}\{c_1, 1\}}{\text{min}\{c_1, 1\}} \text{exp} \left[-\frac{\sigma t}{2} \left(1 - \frac{4 \ve}{\sigma} \left(\frac{\om(-t)}{-t}\right)\right)\right] \leq 1 \;\; \text{and} \;\; e^{-\frac{\sigma t}{2}} \rho^2(\theta_{-t}\om) \leq 1,\;\, \om \in \mathfrak{Q}.
$$
Then 
\beq \bl{Gr0}
	\sup_{g_0 \in B(\theta_{-t}\om)} \|G(-1, \theta_{-t}\om; -t, Q(-t,\om)g_0)\| \leq r_0(\om),\quad \text{for}\,\; t\geq T_B(\om),\;\, \om \in \mathfrak{Q},
\eeq
where $r_0(\om)$ is given in \eqref{srd20}.

Finally, put together \eqref{srd24}, \eqref{srd25} and \eqref{Gr0}. We end up with 
\beq \bl{Ab}
	\sup_{g_0 \in B(\theta_{-t}\om)} \|\Phi(t, \theta_{-t} \om, g_0)\| = \sup_{g_0 \in B(\theta_{-t}\om)} \|G(0, \theta_{-t}\om; -t, Q(-t,\om)g_0)\| \leq R_0(\om), 
\eeq
for $t \geq T_B(\om)$ a.s. Hence, the random set in \eqref{srd26} is a pullback absorbing set for the Hindmarsh-Rose cocycle $\Phi$. The proof is completed.
\end{proof}

\section{\textbf{The Existence of Random Attractor}}

In this section, we shall prove that this Hindmarsh-Rose cocycle is pullback asymptotically compact on $H$ through the following two lemmas. Then the main result on the existence of a random attractor for the Hindmarsh-Rose cocycle is established. 

\begin{lemma} \bl{theo4} 
	Assume that for any random variable $R(\om) > 0$ and any given $\tau < -2$, there exists a random variable $M(R, \om) > 0$ such that the following statement is valid\textup{:} If there is a time $t^* \in [-2, -1]$ such that $G(t^*, \om;\,\tau, Q(\tau,\om)g_0) \in E$ for any $g_0 \in H$ which satisfies
	$$
	\|G(t^*,\, \om; \,\tau, \,Q(\tau,\om)g_0)\|_E \leq R(\om), 
	$$
	then it holds that 
	\beq \bl{srd28}
		\|G(0,\, \om; \, \tau,\, Q(\tau,\om)g_0)\|_E \leq M(R, \om).
	\eeq
\end{lemma}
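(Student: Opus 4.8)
The plan is to propagate the $E$-bound forward from the time $t^*\in[-2,-1]$ to time $0$ over the bounded interval $[t^*,0]$, whose length is at most $2$, by means of a gradient ($H^1$) energy estimate. Since the excerpt already guarantees that every weak solution becomes a strong solution lying in $E$ for $t>\tau$, the formal computation below is justified, e.g. by performing it on the Galerkin approximations and passing to the limit. I would pair \eqref{sUq}, \eqref{sVq}, \eqref{sZq} in $L^2(\gw)$ with $-\gd U$, $-\gd V$, $-\gd Z$ respectively and add the three identities. Using the Neumann boundary condition \eqref{nbc2} and integration by parts, the left-hand side produces $\tfrac12\tfrac{d}{dt}(\|\nb U\|^2+\|\nb V\|^2+\|\nb Z\|^2)$ together with the nonnegative dissipation terms $d_1\|\gd U\|^2+d_2\|\gd V\|^2+d_3\|\gd Z\|^2$, which I simply discard.

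The heart of the matter is the nonlinear terms. Integrating by parts, the cubic term furnishes the dissipative quantity $-\,\frac{3b}{Q(t,\om)^2}\int_\gw U^2|\nb U|^2\,dx$, which is the engine of the estimate. The two genuinely nonlinear ``bad'' contributions are the quadratic self-interaction $\frac{2a}{Q(t,\om)}\int_\gw U|\nb U|^2\,dx$ from \eqref{sUq} and the cross term $\frac{2\gb}{Q(t,\om)}\int_\gw U\,\nb U\cdot\nb V\,dx$ from \eqref{sVq}; the decisive observation is that, after Cauchy--Schwarz, both are dominated by the cubic term because $\int_\gw U^2|\nb U|^2\,dx=\|U\nb U\|^2$ is exactly the factor that appears. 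Concretely, Young's inequality gives
\begin{align*}
\frac{2a}{Q}\int_\gw |U|\,|\nb U|^2\,dx &\leq \frac{b}{Q^2}\int_\gw U^2|\nb U|^2\,dx + \frac{a^2}{b}\|\nb U\|^2,\\
\frac{2\gb}{Q}\|U\nb U\|\,\|\nb V\| &\leq \frac{b}{Q^2}\|U\nb U\|^2 + \frac{\gb^2}{b}\|\nb V\|^2,
\end{align*}
so together they consume only $\frac{2b}{Q^2}\int_\gw U^2|\nb U|^2\,dx$ of the available $\frac{3b}{Q^2}\int_\gw U^2|\nb U|^2\,dx$. Crucially, the three constant forcing terms $JQ$, $\alpha Q$ and $qcQ$ all pair to zero against $-\gd(\cdot)$ by the Neumann condition, so no inhomogeneous term survives; the remaining linear couplings $V$, $-Z$ and $qU$ are disposed of by $\langle\nb V,\nb U\rangle\leq\tfrac12\|\nb U\|^2+\tfrac12\|\nb V\|^2$ and the like. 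Notably every $Q$-factor cancels in the surviving coefficients.

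Collecting terms, I expect to reach a differential inequality of the clean form
$$
\frac{d}{dt}\bigl(\|\nb U\|^2+\|\nb V\|^2+\|\nb Z\|^2\bigr)\leq 2C\bigl(\|\nb U\|^2+\|\nb V\|^2+\|\nb Z\|^2\bigr),
$$
where $C=\max\{\tfrac{a^2}{b}+1+\tfrac{q}{2},\ \tfrac{\gb^2}{b}+\tfrac12,\ \tfrac12+\tfrac{q}{2}\}$ is an absolute constant independent of $\om$ and of the initial data. Applying Gronwall's inequality on $[t^*,0]$ and using $0-t^*\leq 2$ together with $\|\nb G(t^*)\|^2\leq\|G(t^*)\|_E^2\leq R^2(\om)$ yields $\|\nb G(0)\|^2\leq e^{4C}R^2(\om)$. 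Finally, propagating the $L^2$ bound forward from $t^*$ by Gronwall applied to \eqref{srd13} (over the same bounded interval, on which $Q(t,\om)$ is bounded) controls $\|G(0)\|^2$ in terms of $R(\om)$ and $\om$; adding the two estimates delivers $\|G(0)\|_E\leq M(R,\om)$ as claimed in \eqref{srd28}.

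I expect the main obstacle to be the nonlinear bookkeeping, in particular verifying that the dissipative cubic term really dominates both the quadratic self-interaction and the $U$--$V$ cross term simultaneously; recognizing $\|U\nb U\|^2$ as the common currency shared by these terms is the key that makes the absorption work. A secondary technical point is the rigorous justification of the $-\gd$ pairing for merely weak solutions, which I would handle through the Galerkin scheme already invoked in the excerpt rather than by manipulating the limit solution directly.
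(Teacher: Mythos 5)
Your proof is correct, but it takes a genuinely different route from the paper's. The paper does not integrate the quadratic terms by parts: it estimates $-\frac{a}{Q}\int_\gw U^2\gd U$ and $\frac{\beta}{Q}\int_\gw U^2 \gd V$ directly by Young's inequality against the $\|\gd U\|^2$, $\|\gd V\|^2$ dissipation, which leaves $\|U\|_{L^4}^4$ on the right-hand side; it then invokes the Sobolev embedding $H^1\hookrightarrow L^4$ to convert this into $\|U\|^4+\|\nb U\|^4$, so the resulting differential inequality is of the form $\xi'\leq p\,\xi+h$ with $p$ itself proportional to $\|\nb G\|^2/Q^2$. Closing that loop requires the \emph{uniform} Gronwall lemma together with the auxiliary integral bounds $N_1,N_2,N_3$ extracted from the $L^2$ estimate \eqref{srd13} and a pathwise bound $C(\om)=\exp(\ve\sup_{[-2,0]}|\om|)$ on $Q$. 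You sidestep all of this: by first integrating by parts you expose the full cubic dissipation $\frac{3b}{Q^2}\|U\nb U\|^2$ (the paper's first display actually drops the factor $3$ from $\nb(U^3)=3U^2\nb U$) and absorb both the $\frac{2a}{Q}\int U|\nb U|^2$ self-interaction and the $\frac{2\beta}{Q}\int U\,\nb U\cdot\nb V$ cross term into it, with all $Q$-factors cancelling, so you land on a \emph{linear} Gronwall inequality with a deterministic constant $C$ over an interval of length at most $2$. Your observation that the constant forcings $JQ$, $\alpha Q$, $qcQ$ vanish against $-\gd(\cdot)$ under the Neumann condition is also correct (the paper keeps them as harmless inhomogeneous terms). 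The net effect is a shorter argument with an $\om$-independent exponential factor $e^{4C}$, at the cost of relying essentially on the sign of the cubic damping; the paper's version is heavier but would survive a weaker or absent cubic term. Both treatments of the residual $L^2$ part of the $E$-norm (yours via \eqref{srd13}, the paper's via \eqref{srd33}) are equivalent. One small caveat: make sure the Galerkin justification you defer to really does legitimize the pairing with $-\gd U$ for $t>\tau$, as the parabolic regularity statement in Section 2 already places the solution in $E$ (and in $D(A)$ for a.e.\ $t$), so this is routine.
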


\begin{proof}
	Denote the solution of \eqref{sGq} by $G(t,\om;\tau,Q(\tau,\om)g_0) = (U(t), V(t), Z(t))$. Take the $L^2$ inner-product $\inpt{\eqref{sUq}, -\gd U(t)}$ to obtain
	\begin{equation*}
	\begin{split}
	&\frac{1}{2} \frac{d}{dt} \|\nb U\|^2 + d_1 \|\gd U\|^2 \\[3pt]
	= & \int_\gw \left(- \frac{a}{Q(t,\om)} U^2 \gd U - \frac{b}{Q(t,\om)^2} U^2 |\nb U|^2 -V \gd U + Z \gd U - J Q(t,\om) \gd U \right)\, dx \\
	\leq & \int_\gw\left(\frac{2 a^2}{d_1 Q(t,\om)^2} U^4 + \frac{d_1}{8} |\gd U|^2 + \frac{2}{d_1}V^2 + \frac{d_1}{8} |\gd U|^2 \right) ds \\
	 &\, +  \int_\gw  \left(\frac{2}{d_1}Z^2 + \frac{d_1}{8} |\gd U|^2 + \frac{2 J^2 Q(t,\om)^2}{d_1} + \frac{d_1}{8} |\gd U|^2 \right) dx - \int_\gw \frac{b}{Q(t,\om)^2} U^2 |\nb U|^2 \, dx.
	\end{split}
	\end{equation*}
	It follows that
	\beq \bl{srd29}
	\begin{split}
		& \frac{d}{dt} \|\nb U\|^2 + d_1 \|\gd U\|^2 + \frac{2b}{Q(t,\om)^2} \|U \nb U\|^2\\[3pt]
		\leq &\, \frac{4 a^2}{d_1 Q(t,\om)^2} \|U\|^4_{L^4} + \frac{4}{d_1} \|V\|^2 + \frac{4}{d_1} \|Z\|^2 + \frac{4 J^2 Q(t,\om)^2}{d_1} |\gw |,\quad t > \tau.
	\end{split}
	\eeq
	Take the $L^2$ inner-product $\inpt{\eqref{sVq}, -\gd V(t)}$, we get
	\begin{equation*}
	\begin{split}
		 & \frac{1}{2} \frac{d}{dt} \|\nb V\|^2 + d_2 \|\gd V\|^2 + \|\nb V\|^2 \\[3pt]
		= & \int_\gw \left(-\alpha Q(t,\om) \gd V + \frac{\beta}{Q(t,\om)} U^2 \gd V + V \gd V\right) dx\\
		\leq & \int_\gw \left(\frac{\alpha^2 Q(t,\om)^2}{d_2} + \frac{d_2}{4} |\gd V|^2 + \frac{\beta^2}{d_2 Q(t,\om)^2} U^4 + \frac{d_2}{4} |\gd V|^2 - |\nb V|^2\right) dx.
	\end{split}
	\end{equation*}
	Then
	\beq \bl{srd30}
		\frac{d}{dt} \|\nb V\|^2 + d_2 \|\gd V\|^2 + 2 \|\nb V\|^2 \leq  \frac{2 \alpha^2 Q(t,\om)^2}{d_2} |\gw | + \frac{2 \beta^2}{d_2 Q(t,\om)^2} \|U\|^4_{L^4},\quad t >\tau.
	\eeq
	Take the $L^2$ inner-product $\inpt{\eqref{sZq}, -\gd Z(t)}$, we get
	\begin{equation*}
	\begin{split}
	& \frac{1}{2} \frac{d}{dt} \|\nb Z\|^2 + d_3 \|\gd Z\|^2 \\[3pt]
	= & \int_\gw (qcQ(t,\om) \gd Z - q U \gd Z + rZ \gd Z)\, dx\\
	\leq &\, \int_\gw \left(\frac{q^2 c^2 Q(t,\om)^2}{d_3} + \frac{d_3}{4} |\gd Z|^2 + \frac{q^2}{d_3}U^2 + \frac{d_3}{4} |\gd Z|^2 - r|\nb Z|^2 \right) dx.
	\end{split}
	\end{equation*}
	It implies
	\beq \bl{srd31}
		\frac{d}{dt} \|\nb Z\|^2 + d_3 \|\gd Z\|^2 + 2 r \|\nb Z\|^2 \leq \frac{2 q^2 c^2 Q(t,\om)^2}{d_3} |\gw | +  \frac{2 q^2}{d_3}\|U\|^2,\quad t> \tau.
	\eeq
	Sum up the above estimates \eqref{srd29}, \eqref{srd30} and \eqref{srd31}. Then we obtain
	
	\beq \bl{srd32}
	\begin{split}
		& \frac{d}{dt} (\|\nb U\|^2 + \|\nb V\|^2 + \|\nb Z\|^2) + d_1 \|\gd U\|^2 + d_2 \|\gd V\|^2 + d_3 \|\gd Z\|^2 \\[3pt]
		&\; + \frac{2b}{Q(t,\om)^2} \|U \nb U\|^2 + 2 \|\nb V\|^2 + r \|\nb Z\|^2 \\
		 \leq &\; \frac{2 q^2}{d_3}\|U\|^2 +\frac{4}{d_1} \|V\|^2 + \frac{4}{d_1} \|Z\|^2 + \frac{1}{Q(t,\om)^2} \left(\frac{4 a^2}{d_1} + \frac{2 \beta^2}{d_2} \right) \|U\|^4_{L^4}\\
		&\; +  Q(t,\om)^2 \left(\frac{4 J^2}{d_1} + \frac{2 \alpha^2}{d_2} + \frac{2 q^2 c^2}{d_3} \right) |\gw |.
	\end{split}
	\eeq
	Since $H^1(\gw) \hookrightarrow L^4 (\gw)$, there is a positive constant $\eta > 0$ associated with the Sobolev imbedding inequality such that
	$$
	\|U\|^4_{L^4} \leq \eta (\|U\|^2 + \|\nb U\|^2)^2 \leq 2 \eta (\|U\|^4 +\|\nb U \|^4).
	$$
	For any $t \in [t^*, 0] \subset [\tau, 0]$, the inequality \eqref{srd16} implies that
	\beq \bl{srd33}
	\begin{split}
		&\; \|G(t,\om;\tau, Q(\tau,\om)g_0)\|^2 \\[3pt]
		\leq &\,  \frac{\text{max}\{c_1, 1\}}{\text{min}\{c_1, 1\}} \|G(t^*,\om;\tau, Q(\tau,\om)g_0)\|^2    \\
		&\, + \frac{| \gw |}{\text{min}\{c_1, 1\}} \int_{-\infty}^{0} e^{\sigma s} \left[\left(2 c_2 + \frac{1}{32}c_1^2\right) Q(s,\om)^2  + 2 (c_1 a)^4 Q(s,\om)^4 \right] ds, 
	\end{split}
	\eeq
where the improper integralin \eqref{srd33} is convergent due to \eqref{srd7} and $\sigma = \frac{1}{2} \min \{1, r\} > 0$, as given after \eqref{srd13}. Denote by
	\begin{equation*}
	\begin{split}
		P_0(R,\om) &\; = \frac{\text{max}\{c_1, 1\}}{\text{min}\{c_1, 1\}} R^2(\om) \\[3pt] 
		&\; + \frac{| \gw |}{\text{min}\{c_1, 1\}} \int_{-\infty}^{0} e^{\sigma s} \left[\left(2 c_2 + \frac{1}{32}c_1^2\right) Q(s,\om)^2  + 2 (c_1 a)^4 Q(s,\om)^4 \right] ds.
	\end{split}
	\end{equation*}
	Then from \eqref{srd32} we obtain
	\beq \bl{srd34}
		\begin{split}
			&\; \frac{d}{dt} \|\nb G\|^2 + d_1 \|\gd U\|^2 + d_2 \|\gd V\|^2 + d_3 \|\gd Z\|^2 \\[3pt]
			&\; + \frac{2b}{Q(t,\om)^2} \|U \nb U\|^2 + 2 \|\nb V\|^2 + r \|\nb Z\|^2 \\
			\leq &\; \text{max}\left\{\frac{2 q^2}{d_3}, \frac{4}{d_1}\right\} P_0(R,\om) + \frac{2 \eta}{Q(t,\om)^2} \left(\frac{4 a^2}{d_1} + \frac{2 \beta^2}{d_2} \right) P^2_0(R,\om)\\
	&\; + \frac{2 \eta}{Q(t,\om)^2} \left(\frac{4 a^2}{d_1} + \frac{2 \beta^2}{d_2} \right) \|\nb U\|^4+  Q(t,\om)^2 \left(\frac{4 J^2}{d_1} + \frac{2 \alpha^2}{d_2} + \frac{2 q^2 c^2}{d_3} \right) |\gw |.
		\end{split}
	\eeq
	Here we can apply the uniform Gronwall inequality to the following inequality
	\beq \bl{srd35}
	\begin{split}
		\frac{d}{dt} \|\nb& G(t)\|^2  \leq  \frac{2 \eta}{Q(t,\om)^2} \left[\frac{4 a^2}{d_1} + \frac{2 \beta^2}{d_2} \right] \|\nb G(t)\|^2 \|\nb G(t)\|^2 + \text{max}\left\{\frac{2 q^2}{d_3}, \frac{4}{d_1}\right\} P_0(R,\om)\\
   & + \frac{2 \eta}{Q(t,\om)^2} \left[\frac{4 a^2}{d_1} + \frac{2 \beta^2}{d_2} \right] P^2_0(R,\om) + Q(t,\om)^2 \left(\frac{4 J^2}{d_1} + \frac{2 \alpha^2}{d_2} + \frac{2 q^2 c^2}{d_3} \right) |\gw |
	\end{split}
	\eeq
	for $t \geq t^*$, which is written in the form
	\beq \bl{srd36}
	\frac{d \xi}{dt} \leq p\, \xi + h,
	\eeq
	where 
	\begin{equation*}
	\begin{split}
		&\; \xi (t) =  \|\nb G (t)\|^2, \\[3pt]
		&\; p(t) = \frac{2 \eta}{Q(t,\om)^2} \left(\frac{4 a^2}{d_1} + \frac{2 \beta^2}{d_2} \right) \|\nb G(t)\|^2,\\
		&\; h(t) = \text{max}\left\{\frac{2 q^2}{d_3}, \frac{4}{d_1}\right\} P_0(R,\om) + \frac{2 \eta}{Q(t,\om)^2} \left(\frac{4 a^2}{d_1} + \frac{2 \beta^2}{d_2} \right) P^2_0(R,\om) \\
		&\quad \quad \quad + Q(t,\om)^2 \left(\frac{4 J^2}{d_1} + \frac{2 \alpha^2}{d_2} + \frac{2 q^2 c^2}{d_3} \right) |\gw |.
	\end{split}
	\end{equation*}
By integration of the inequality \eqref{srd13} over $[t, t+1]$ for $t \in [t^*, -1]$, we can deduce that
	\begin{equation*}
	\begin{split}
		 \int_{t}^{t+1} 2d (c_1 \|\nb U(s)\|^2 &+ \|\nb V(s)\|^2 + \|\nb Z(s)\|^2)\,ds \leq c_1 \|U(t)\|^2 + \|V(t)\|^2 + \|Z(t)\|^2\\[3pt]
		&\, + \int_{t}^{t+1} \left[\left(2 c_2 + \frac{1}{32}c_1^2\right)Q(s,\om)^2 | \gw | + 2 (c_1 a)^4 Q(s,\om)^4 |\gw | \right]\, ds.
	\end{split}
	\end{equation*}
	Since $Q(t,\om) = e^{-\ve \om(t)}$, the above inequality implies that, for $t \in [t^*, -1]$,
	\begin{equation} \bl{srd37}
	\begin{split}
		 &\int_{t}^{t+1} \xi (s)\, ds \leq \frac{\max \{c_1,1\}}{2 d \;\text{min}\{c_1, 1\}} P_0(R,\om)    \\[3pt]
		 + &\, \frac{1}{2 d \;\text{min}\{c_1, 1\}} \int_{t}^{t+1} \left[\left(2 c_2 + \frac{1}{32}c_1^2\right)e^{-2 \ve \om(s)} |\gw | + (c_1 a)^4 e^{-4 \ve \om(s)} |\gw | \right] ds.
	\end{split}
	\end{equation}
Here $e^{-\ve \om(t)}$ is continuous function on $[-2, 0]$, so that there is a bound 
$$
	|Q(t, \om)| = e^{-\ve \om(t)} \leq e^{\ve |\om(t)|} \leq C(\om) =  \exp \left(\ve \sup_{t \in [-2, 0]} |\om (t)| \right), \quad t \in [-2, 0].
$$
Then \eqref{srd37} implies that for any $\tau < -2$ and $t \in [t^*, -1]$, 
	\beq \bl{srd38}
	\begin{split}
	\int_{t}^{t+1} \xi (s)\, ds &\;\leq N_1 (R, \om),
	\end{split}
	\eeq
	where
	\begin{equation*}
	\begin{split}
	N_1 (R, \om) &\; =  \frac{1}{2 d \;\text{min}\{c_1, 1\}} \\[3pt]
	&\; \times \left\{\text{max}\{c_1,1\} P_0(R,\om) + \left(2 c_2 + \frac{1}{32}c_1^2\right) C^2(\om) |\gw | + (c_1 a)^4 C^4(\om) |\gw | \right\}.
	\end{split}
	\end{equation*}
Next we have
	\beq \bl{srd39}
	\begin{split}
		\int_{t}^{t+1} p(s)\, ds &\; \leq \int_{t}^{t+1} 2 \eta  \left(\frac{4 a^2}{d_1} + \frac{2 \beta^2}{d_2} \right) \frac{1}{Q(s,\om)^2} \|\nb G(s)\|^2\; ds\\[3pt]
		&\; \leq 2 \eta \,C^2 (\om) \left(\frac{4 a^2}{d_1} + \frac{2 \beta^2}{d_2} \right) \int_{t}^{t+1} \|\nb G(s)\|^2\; ds \leq N_2 (R, \om),
	\end{split}
	\eeq
	where
	$$
	N_2 (R, \om) = 2 \eta \,C^2(\om) \left(\frac{4 a^2}{d_1} + \frac{2 \beta^2}{d_2} \right) N_1(R, \om).
        $$
Moreover, for any $\tau < -2$ and $t \in [t^*, -1]$, we obtain
	\beq \bl{srd40}
	\begin{split}
		\int_{t}^{t+1} h(s)\, ds &\; \leq \int_{t}^{t+1} \left[\text{max}\left\{\frac{2 q^2}{d_3}, \frac{4}{d_1}\right\} P_0(R,\om) + \frac{2 \eta}{Q(s,\om)^2} \left(\frac{4 a^2}{d_1} + \frac{2 \beta^2}{d_2} \right) P^2_0(R,\om) \right. \\[2pt]
		&\quad \left. + Q(s,\om)^2 \left(\frac{4 J^2}{d_1} + \frac{2 \alpha^2}{d_2} + \frac{2 q^2 c^2}{d_3} \right) |\gw | \right]\; ds\\[2pt]
		&\; \leq \text{max}\left\{\frac{2 q^2}{d_3}, \frac{4}{d_1}\right\}P_0(R,\om) + 2 \eta C^2(\om) \left(\frac{4 a^2}{d_1} + \frac{2 \beta^2}{d_2} \right) P^2_0(R,\om)\\[2pt]
		&\quad + C^2(\om) \left(\frac{4 J^2}{d_1} + \frac{2 \alpha^2}{d_2} + \frac{2 q^2 c^2}{d_3} \right) |\gw | = N_3 (R, \om).
	\end{split}
	\eeq
Now we have shown that, for any $\tau < -2$ and $t \in [t^*, -1]$,
	\beq \bl{srd41}
	\int_{t}^{t+1} \sigma(s)\, ds \leq N_1, \quad \int_{t}^{t+1} p(s)\, ds \leq N_2,  \quad  \int_{t}^{t+1} h(s)\, ds \leq N_3.
	\eeq
	Thus the uniform Gronwall inequality \cite[Lemma D.3]{SY} applied to \eqref{srd36} shows that
	\beq \bl{srd42}
	\xi (t) = \|\nb G(t)\|^2 \leq (N_1 + N_3) e^{N_2}, \quad \text{for all}\;\; t \in [t^* +1, 0].
	\eeq
Finally, the claim  \eqref{srd28} is proved:
	\begin{equation*}
	\begin{split}
		&\|G(0,\om;\tau,Q(\tau,\om)g_0)\|_E^2  = \|G(0,\om;\tau,Q(\tau,\om)g_0)\|^2 + \|\nb G(0,\om;\tau,Q(\tau,\om)g_0)\|^2 \\[3pt]
	        \leq &\, M(R, \om) = P_0(R,\om) + (N_1(R, \om) + N_3(R, \om)) e^{N_2(R,\, \om)}.
	\end{split}
	\end{equation*}
The proof is completed.
\end{proof}

\begin{lemma} \bl{theo5}
	For the Hindmarsh-Rose cocycle $\Phi$, there exists a random variable $M^*(\om) > 0$ with the property that for any given random variable $\rho(\om) > 0$ there is a finite time $T(\rho,\om) > 0$ such that if $g_0=(u_0, v_0, z_0) \in H$ with $\|g_0\| \leq \rho(\om)$, then $\Phi(t, \theta_{-t} \om,g_0) \in E$ and
	\beq \bl{srd44}
	\|\Phi(t, \theta_{-t},\om,g_0)\|_E \leq M^*(\om), \quad \text{for}\;\; t > T(\rho,\om).
	\eeq
\end{lemma}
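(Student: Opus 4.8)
The plan is to supply the hypothesis of Lemma \ref{theo4} by producing, for $t$ large, a time $t^* \in [-2,-1]$ at which the pulled-back solution is bounded in $E$ by a $\rho$-independent random variable, and then to invoke that lemma to transport the bound forward to time $0$. The starting point is the pullback identity \eqref{vpt}: since $\om(0)=0$ forces $Q(0,\om)=1$, one has
\[
\|\Phi(t,\theta_{-t}\om,g_0)\|_E = \|G(0,\om;-t,G_0)\|_E, \qquad G_0 = Q(-t,\om)g_0,
\]
so it suffices to bound $\|G(0,\om;-t,G_0)\|_E$ uniformly for $t$ large and for $\|g_0\|\leq\rho(\om)$. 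First I would record a $\rho$-independent $L^2$-bound on the full window $[-2,0]$: evaluating the absorbing estimate \eqref{srd16} at any $s\in[-2,0]$ and using $e^{-\sigma(s+t)}\leq e^{2\sigma}e^{-\sigma t}$, the temperedness of $\rho(\theta_{-t}\om)$ together with the sublinear growth \eqref{srd7} drives the initial-data term to zero. Hence there is a finite $T(\rho,\om)>2$ so that $\sup_{s\in[-2,0]}\|G(s,\om;-t,G_0)\|\leq \tilde r_0(\om)$ for all $t>T(\rho,\om)$, where $\tilde r_0(\om)$ is built only from the convergent $Q(s,\om)^2$- and $Q(s,\om)^4$-integrals, exactly as $r_0(\om)$ in \eqref{srd20}, and in particular does not depend on $\rho$.

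Next I would extract the $H^1$-control needed to locate $t^*$. Integrating the reduced inequality \eqref{srd13} over $[-2,-1]$ and discarding the nonnegative zeroth-order term, the gradient integral is dominated by $c_1\|U(-2)\|^2+\|V(-2)\|^2+\|Z(-2)\|^2$ plus the same convergent $Q$-integrals, so the $L^2$-bound at $s=-2$ just obtained yields a $\rho$-independent estimate
\[
\int_{-2}^{-1} \|\nb G(s,\om;-t,G_0)\|^2 \, ds \leq \Theta(\om).
\]
By the mean value theorem for integrals there exists $t^* \in [-2,-1]$ with $\|\nb G(t^*)\|^2 \leq \Theta(\om)$, and by the parabolic smoothing noted after \eqref{sGq} (every weak solution lies in $E$ for times strictly after the initial time $-t<-2$) we have $G(t^*)\in E$ with $\|G(t^*,\om;-t,G_0)\|_E^2 = \|G(t^*)\|^2 + \|\nb G(t^*)\|^2 \leq \tilde r_0^2(\om)+\Theta(\om) =: R^2(\om)$.

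Finally, applying Lemma \ref{theo4} with this $R(\om)$ and with $\tau=-t<-2$ delivers
\[
\|\Phi(t,\theta_{-t}\om,g_0)\|_E = \|G(0,\om;-t,G_0)\|_E \leq M(R,\om) =: M^*(\om), \quad t > T(\rho,\om),
\]
which is the assertion \eqref{srd44}; since $R(\om)$ depends only on the parameters and $\om$, so does $M^*(\om)$. The genuine engine of the argument is Lemma \ref{theo4}, already established, so the remaining work is largely bookkeeping. The one delicate point, which I expect to be the main obstacle, is ensuring that every selection quantity ($\tilde r_0$, $\Theta$, and hence $R$ and $M^*$) is independent of $\rho$ and remains a finite random variable; this rests on the convergence of the $Q^2$- and $Q^4$-integrals guaranteed by \eqref{srd7} and on the temperedness of $\rho(\theta_{-t}\om)$ absorbing the initial-data contribution as $t\to\infty$.
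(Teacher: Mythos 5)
Your proposal is correct and follows essentially the same route as the paper: establish a $\rho$-independent $L^2$ bound on the window $[-2,0]$ via the absorbing estimate \eqref{srd16} and the sublinear growth \eqref{srd7}, integrate \eqref{srd13} to bound $\int_{-2}^{-1}\|\nb G\|^2\,ds$, use the mean value theorem to find $t^*\in[-2,-1]$ with $\|G(t^*)\|_E$ controlled, and then invoke Lemma \ref{theo4} to carry the bound to $t=0$. The only cosmetic difference is that the paper first reduces to initial data in the absorbing ball $B_0(\om)$ before running this argument, whereas you handle a general tempered $\rho$ directly; both are valid.
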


\begin{proof}
	We have proved in Theorem \ref{theo3} the existence of a pullback absorbing set $B_0(\om) = B_H(0, R_0(\om))$ for the Hindmarsh-Rose cocycle $\Phi$ in $H$. Thus it suffices to show that the above statement \eqref{srd44} holds for $\rho(\om) = R_0(\om)$ given in \eqref{srd25}, namely, for $g_0 \in B_0(\om)$.
	
	From \eqref{srd16}, for any $g_0 \in B_0(\om)$, we obtain
	\beq \bl{srd45}
	\begin{split}
		&\|G(t,\om;\tau,Q(\tau,\om)g_0)\|^2 \leq \frac{\text{max}\{c_1, 1\}}{\text{min}\{c_1, 1\}} e^{- \sigma (t - \tau)} |Q(\tau,\om)|^2 R_0^2(\om)\\
		&\; + \frac{|\gw |}{\text{min}\{c_1, 1\}} \int_{- \infty}^t e^{- \sigma(t-s)} \left[\left(2 c_2 + \frac{1}{32}c_1^2\right)Q(s,\om)^2 + 2 (c_1 a)^4 Q(s,\om)^4 \right] ds.
	\end{split}
	\eeq
	Now we prove that there exists a time $T^*(R_0(\om)) < -2$ such that for any $\tau \leq T^*(R_0)$ one has 
	\beq \bl{srd46}
		\sup_{t \in [-2,0]} \; \sup_{g_0 \in B_0(\om)} \|G(t,\om;\tau,Q(\tau,\om)g_0)\| \leq R_1 (\om),
	\eeq
where $R_1(\om) > 0$ is a positive random variable given in \eqref{srd23c} later in this proof. 
	
Take $t=-2$ and recall that $Q(\tau, \om) = e^{-\ve\om (\tau)}$. The inequality \eqref{srd45} implies 
	\beq \bl{srd17a}
	\begin{split}
		&\|G(-2,\om;\tau,Q(\tau,\om)g_0)\|^2 \leq \frac{ \text{max}\{c_1, 1\}}{\text{min}\{c_1, 1\}} e^{2 \sigma - \sigma |\tau| - 2\ve\om (\tau)} R_0^2(\om)   \\
		&\; + \frac{|\gw |}{\text{min}\{c_1, 1\}} \int_{-\infty}^{-2} e^{2 \sigma + \sigma s} \left[\left(2 c_2 + \frac{1}{32}c_1^2\right)e^{- 2 \ve \om (s)} + 2 (c_1 a)^4 e^{- 4 \ve \om (s)} \right]\, ds.
	\end{split}
	\eeq
	Note that $\tau \leq T^*(R_0) < -2$ implies 
	$$
	e^{- \sigma |\tau| - 2\ve\om (\tau)} = \text{exp}\left(-\sigma |\tau|\left[1 + \frac{2\ve\om (\tau)}{\sigma |\tau|}\right]\right) = \text{exp}\left(-\sigma |\tau|\left[1 - \frac{2\ve\om (\tau)}{\sigma \tau}\right]\right). 
	$$
	By the asymptotically sublinear growth property \eqref{srd7}, for $\om \in \mathfrak{Q}$, there exist a time $T^*(R_0) \leq -2$ such that for any $\tau \leq T^*(R_0)$, which means $\tau$ is very negative, we have 
	\beq \bl{srd18a}
	1 - \frac{2\ve\om (\tau)}{\sigma \tau} \geq \frac{1}{2}\quad \text{and}\quad e^{\sigma (2 - \frac{1}{2}|\tau|)}\frac{ \text{max}\{c_1, 1\}}{\text{min}\{c_1, 1\}} R_0^2(\om) \leq 1.
	\eeq
	Then we get
	\begin{equation} \label{srd18b}
		\begin{split}
		&\; \|G(-2,\om;\tau,Q(\tau,\om)g_0)\|^2 \\[3pt]
		\leq &\; 1 + \frac{| \gw |}{\text{min}\{c_1, 1\}} \int_{-\infty}^{-2} e^{2 \sigma + \sigma s} \left[\left(2 c_2 + \frac{1}{32}c_1^2\right)e^{- 2 \ve \om (s)} + 2  (c_1 a)^4 e^{- 4 \ve \om (s)} \right] ds\\
		\leq &\; 1 + \frac{| \gw |}{\text{min}\{c_1, 1\}} \int_{-\infty}^{-1} e^{2 \sigma + \sigma s} \left[\left(2 c_2 + \frac{1}{32}c_1^2\right)e^{- 2 \ve \om (s)} + 2 (c_1 a)^4 e^{- 4 \ve \om (s)} \right] ds\\
		= &\; r_0^2(\om),
		\end{split}
		\end{equation}
	where $r_0(\om)$ is given in \eqref{srd20}.
	
	For $t \in [-2,0]$, integrate the inquality \eqref{srd13} over $[-2, t]$ to obtain
	\beq \bl{srd23a}
	\begin{split}
		&\, \|G(t,\om;\tau,Q(\tau,\om)g_0)\|^2 + 2d \int_{-2}^{t} \|\nb G(s,\om;\tau,Q(\tau,\om)g_0)\|^2\, ds\\[3pt]
		\leq &\, \frac{\text{max}\{c_1, 1\}}{\text{min}\{c_1, 1\}} \|G(-2,\om;\tau,Q(\tau,\om)g_0)\|^2 \\
		&\, +  \frac{| \gw|}{\text{min}\{c_1, 1\}} \int_{-2}^{t} \left[\left(2 c_2 + \frac{1}{32}c_1^2\right)Q(s,\om)^2  + 2 (c_1 a)^4 Q(s,\om)^4 \right] ds.
	\end{split}
	\eeq
The inequalities \eqref{srd18b} and \eqref{srd23a} imply that \eqref{srd46} is valid:
	\beq \bl{srd23b}
		 \|G(t,\om;\tau,Q(\tau,\om)g_0)\|^2 \leq R_1 (\om), \quad \text{for all}\;\; t \in [-2,0], \; g_0 \in B_0 (\om).
	\eeq
	where 
	\beq \bl{srd23c}
	\begin{split}
		R_1 (\om) &\, = \frac{\max \{c_1, 1\}}{\min \{c_1, 1\}} r_0^2 (\om)    \\[3pt]
		&\, + \frac{| \gw |}{\min \{c_1, 1\}} \int_{-2}^{0} \left[\left(2 c_2 + \frac{1}{32}c_1^2\right)Q(s,\om)^2  + 2 (c_1 a)^4 Q(s,\om)^4 \right] ds.
	\end{split}
	\eeq
	
Next for $t \geq -2$ and $\tau < T^* (R_0)$, we integrate \eqref{srd13} and by \eqref{srd46} to get
	\beq \bl{srd47}
	\begin{split}
		& \int_{t}^{t+1}	 \|\nb G(s,\om;\tau,Q(\tau,\om)g_0)\|^2\, ds \leq \frac{\max \{c_1, 1\}}{2d\, \min \{c_1, 1\}} \|G(t,\om;\tau,Q(\tau,\om)g_0)\|^2   \\[3pt]
	      + &\, \frac{| \gw |}{2d\, \min \{c_1, 1\}} \int_{t}^{t+1} \left[\left(2 c_2 + \frac{1}{32}c_1^2\right)Q(s,\om)^2 + 2 (c_1 a)^4 Q(s,\om)^4\ \right] ds \leq K(\om),
	\end{split}
	\eeq
where
	\begin{equation*}
	\begin{split}
	K(\om) &\, = \frac{1}{2d\, \min \{c_1, 1\}} \max \left\{c_1, \, 1, \, \left(2 c_2 + c_1^2\right) |\gw |, \, 2(c_1 a)^4 |\gw | \right\}   \\[3pt]
	 &\quad \times  \left\{R_1 (\om) + \int_{-2}^{0} \left[Q(s,\om)^2 + 2 Q(s,\om)^4\right] ds \right\}.
	\end{split}
	\end{equation*}
Take $t = -2$ and $\tau < T^*(R_0)$ in \eqref{srd47}. It implies that there is a time $t^* \in [-2,-1]$ such that 
	$$
	\|\nb G(t^*,\om; \tau, Q(\tau,\om)g_0)\|^2 \leq K(\om),
	$$
so that 
	\beq \bl{srd48}
		 \|G(t^*, \om; \, \tau, \, Q(\tau, \om)g_0)\|_E^2 \leq R_1(\om) + K(\om).
	\eeq
	
Finally, we combine Lemma \ref{theo4} and the bound estimate \eqref{srd48} to conclude that for all $t > |T^*(R_0(\om))|$ it holds that
	\beq \bl{srd49}
	\|\Phi (t, \theta_{-t} \om, g_0)\|_E = \|G (0, \om; -t, \, Q(-t,\om) g_0\|_E \leq M((R_1 + K)^{1/2}, \om )
	\eeq
where $M(R,\om)$ is specified in \eqref{srd28}. Thus the claim \eqref{srd44} of this lemma is proved for $\rho(\om) = R_0(\om)$ with 
	$$
	M^*(\om) = M((R_1 + K)^{1/2}, \om) \quad \text{and} \quad T(\rho,\om) =  |T^*(R_0(\om))|.
	$$
Consequently, \eqref{srd44} is also proved for any random variable $\rho (\om)$ as well, by the remark at the beginning of this proof. It completes the proof.
\end{proof}

We complete this paper to present the main result on the existence of a random attractor for the Hindmarsh-Rose random dynamical system $\Phi$ in the space $H$.
\begin{theorem} \bl{RAC}
	For any positive parameters $d_1, d_2, d_3, a, b, \alpha, \beta, q, r, J, \,\ve$ and $c \in \mathbb{R}$, there exists a random attractor $\mathcal{A}(\om)$ in the space $H = L^2 (\gw, \mathbb{R}^3)$ with respect to the universe $\mathscr{D}_H$ for the Hindmarsh-Rose cocycle $\Phi$ over the metric dynamical system $(\mathfrak{Q}, \mathcal{F}, P, \{{\theta_t}\}_{t \in \mathbb{R}})$. Moreover, the random attractor $\mathcal{A}(\om)$ is a bounded random set in the space $E$.
\end{theorem}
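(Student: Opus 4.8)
The plan is to apply the abstract existence criterion of Theorem \ref{sc8} to the Hindmarsh-Rose cocycle $\Phi$ over the canonical metric dynamical system. Since $\Phi$ was already shown to be a continuous random dynamical system on $H$ in Lemma \ref{SPhi}, only the two hypotheses of Theorem \ref{sc8} remain to be checked: the existence of a closed pullback absorbing set in $\mathscr{D}_H$, and the pullback asymptotic compactness of $\Phi$ with respect to $\mathscr{D}_H$.

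The first hypothesis is furnished by the closed random ball $B_0(\om) = B_H(0, R_0(\om))$ of Theorem \ref{theo3}. What I must still verify is that $B_0 \in \mathscr{D}_H$, i.e. that $R_0(\om)$ is a tempered random variable. This is read off from the explicit formula \eqref{srd25}: every term in $R_0^2(\om)$ is built from the convergent integrals in \eqref{srd20}, whose values grow only subexponentially under the shift $\om \mapsto \theta_{-t}\om$ because of the asymptotically sublinear growth \eqref{srd7} of $W$; hence $\lim_{t\to\infty} e^{-\beta t}\interleave B_0(\theta_{-t}\om)\interleave = 0$ for every $\beta > 0$. Theorem \ref{theo3} then guarantees that $B_0$ pullback absorbs every $B \in \mathscr{D}_H$, so hypothesis (i) holds.

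For the second hypothesis I would argue as follows. Fix $\om \in \mathfrak{Q}$, a sequence $t_m \to \infty$, and points $x_m \in B(\theta_{-t_m}\om)$ with $B \in \mathscr{D}_H$ of tempered radius $\rho_B$. The $H$-to-$E$ regularity estimate of Lemma \ref{theo5}, whose proof runs through the intermediate $H$-bound \eqref{srd46} and the uniform Gronwall estimate \eqref{srd42}, extends when the fixed initial radius $\rho(\om)$ there is replaced by the moving tempered radius $\rho_B(\theta_{-t_m}\om)$: the only place the initial data enters is the first term on the right of \eqref{srd45}, namely $e^{-\sigma(t-\tau)}|Q(\tau,\om)|^2\rho_B^2(\theta_{-t_m}\om)$, which tends to $0$ as $t_m \to \infty$ exactly as in the temperedness computation of Theorem \ref{theo3}. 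Consequently there is a finite time beyond which $\Phi(t_m, \theta_{-t_m}\om, x_m) \in E$ with $\|\Phi(t_m, \theta_{-t_m}\om, x_m)\|_E \leq M^*(\om)$, so the tail of the sequence is bounded in $E$. Since $\gw \subset \mathbb{R}^n$ with $n \leq 3$ is a bounded domain with Lipschitz boundary, the Sobolev embedding $E = H^1(\gw,\mathbb{R}^3) \hookrightarrow H = L^2(\gw,\mathbb{R}^3)$ is compact, and therefore this $E$-bounded tail possesses a subsequence converging strongly in $H$. This is precisely the pullback asymptotic compactness of Definition \ref{sc6}, so hypothesis (ii) holds.

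With both hypotheses verified, Theorem \ref{sc8} delivers a unique random attractor $\mathcal{A}(\om) \in \mathscr{D}_H$, given by the omega-limit formula stated there. For the concluding claim that $\mathcal{A}(\om)$ is bounded in $E$, I would invoke invariance: writing $\mathcal{A}(\om) = \Phi(t, \theta_{-t}\om, \mathcal{A}(\theta_{-t}\om))$ and using that $\mathcal{A} \in \mathscr{D}_H$ has a tempered radius, the same extended form of Lemma \ref{theo5} forces $\mathcal{A}(\om) \subset \{g \in E : \|g\|_E \leq M^*(\om)\}$ for every $\om$. The genuine analytic difficulty of the whole argument has already been absorbed into Lemmas \ref{theo4} and \ref{theo5}, namely the higher-order gradient estimate that is uniform in the pullback time; once that $E$-regularity is in hand, the remaining work in this theorem is the routine compact-embedding extraction together with the temperedness bookkeeping, which I expect to be the only points requiring care.
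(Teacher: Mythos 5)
Your proposal is correct and follows essentially the same route as the paper: the pullback absorbing set from Theorem \ref{theo3}, pullback asymptotic compactness from the $E$-bound of Lemma \ref{theo5} combined with the compact embedding $E \hookrightarrow H$, the abstract existence criterion of Theorem \ref{sc8}, and invariance plus Lemma \ref{theo5} for the $E$-boundedness of $\mathcal{A}(\om)$. The only difference is that you spell out two points the paper leaves implicit — the temperedness of $R_0(\om)$ and the passage from a fixed radius $\rho(\om)$ to the moving radius $\rho_B(\theta_{-t_m}\om)$ in applying Lemma \ref{theo5} — which is careful but not a different argument.
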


\begin{proof}
	In Lemma \ref{theo3}, we proved that there exists a pullback absorbing set $B_0(\om)$ in $H$ for the Hindmarsh-Rose cocycle $\Phi$. According to Definition \ref{sc6}, Lemma \ref{theo5} and the compact imbedding $E \hookrightarrow H$ show that the Hindmarsh-Rose cocycle $\Phi$ is pullback asymptotically compact on $H$ with respect to $\mathscr{D}_H$. Hence, by Theorem \ref{sc8}, there exists a random attractor in $H$ for this random dynamical system $\Phi$, which is given by
	\beq \bl{srd50}
	\mathcal{A} (\omega) = \bigcap_{\tau \geq 0} \; {\overline{\bigcup_{ t \geq \tau} \Phi (t, \theta_{-t} \omega, B_0 (\theta_{-t} \omega))}}.
	\eeq
Since $\mathcal{A}(\om)$ is an invariant set, Lemma \ref{theo5} implies that the random attractor $\mathcal{A}(\om)$ is also a bounded random set in $E$.
\end{proof}

\bibliographystyle{amsplain}

\end{document}